\theoremstyle{plain}
\newtheorem{theorem}{Theorem}
\newtheorem{lemma}{Lemma}  
\newtheorem{cor}{Corollary} 
\newtheorem{definition}{Definition}
\newtheorem{prop}[lemma]{Proposition}
\theoremstyle{remark}
\newtheorem*{remark}{Remark}
\theoremstyle{definition}
\begin{document}

\title[]{Periodic Orbits in the Kepler-Heisenberg Problem}

\author{Corey Shanbrom}

\address{Department of Mathematics and Statistics \\ California State University, Sacramento \\ Sacramento CA 95819}

\email{corey.shanbrom@csus.edu}

\date{23 November, 2013}

\keywords{Periodic orbits, Carnot group, Heisenberg group, Kepler problem, Integrable system, Fundamental solution to Laplacian}

\subjclass[2010]{53C17, 37N05, 37J45, 70H12, 70H06}

\begin{abstract}  
One can formulate the classical Kepler problem on the Heisenberg group, the simplest sub-Riemannian manifold. 
We take the sub-Riemannian Hamiltonian as our kinetic energy, and our potential is the fundamental solution to the Heisenberg sub-Laplacian. The resulting dynamical system is known to contain a fundamental integrable subsystem. Here we use variational methods to prove that the Kepler-Heisenberg system admits periodic orbits with $k$-fold rotational symmetry for any odd integer $k\geq 3$.  Approximations are shown for $k=3$.
 \end{abstract}

\maketitle


\section{Introduction}
In \cite{MS} we introduced the Kepler-Heisenberg problem and recorded many surprising properties.  
The object of interest is a dynamical system which is intended to model the motion of a planet around a sun if the ambient geometry were the three dimensional Heisenberg group equipped with its sub-Riemannian structure.  

In Hamiltonian mechanics, one typically begins with a Riemannian manifold and a choice of potential energy function.  The Riemannian metric induces a kinetic energy function on the cotangent bundle of the manifold.  On the Heisenberg group, we have a natural choice of kinetic energy, induced by the sub-Riemannian metric, which indeed generates the sub-Riemannian geodesics (see \cite{Tour}).  We choose as our potential the fundamental solution to the Heisenberg sub-Laplacian, given explicitly by Folland in \cite{Folland}.  The delta function source, acting as our sun, lies at the origin.  This characterization of gravitational potential is not original, and is guided by the fact that $\frac{1}{4\pi r}$ is the fundamental solution to the Laplacian on $\mathbb R^3$ (see \cite{Albouy}).

Newton studied the Euclidean Kepler Problem in the 17th century and derived Kepler's three laws of planetary motion.  But the problem was posed on spaces of constant curvature much later.  In 1835, Lobachevsky (\cite{Lobachevsky}) posed the Kepler Problem in three-dimensional hyperbolic space.  Bolyai did similar work (independently) in the same time period.  Paul Joseph Serret posed and solved the Kepler Problem on the two-sphere in 1860.  Schering, Lipschitz, Killing, and Liebmann studied the Kepler Problem on hyperbolic and spherical three-space between 1870 and 1902.  For more information, and the relevant references, see Florin Diacu's wonderful paper \cite{Diacu1}.  With this historical background in mind, it seems natural to continue efforts to pose and solve the Kepler Problem in more general geometries (sub-Riemannian geometry encompasses the Riemannian sort.)

We proved in \cite{MS} that phase space for the Kepler-Heisenberg problem contains a fundamental invariant hypersurface on which the dynamics are integrable. In addition, we reduced the integration of this integrable subsystem to the parametrization of a family of algebraic plane curves.  We showed further that periodic orbits, should they exist, must lie on this hypersurface.   Details and additional results appear in \cite{Shanbrom}.

This paper is dedicated to proving Theorem \ref{periodic}, which gives the existence of periodic orbits with $k$-fold rotational symmetry for any odd integer $k\geq 3$.  We employ the direct method from the calculus of variations, as in \cite{CM, Gordon} Numerical approximations of these orbits when $k=3$ are shown in Figures \ref{periodic1} and \ref{periodic2}.

\begin{figure}
\centering
\begin{tabular}{cc}
\hspace{-15pt}
\includegraphics[width=.60\textwidth]{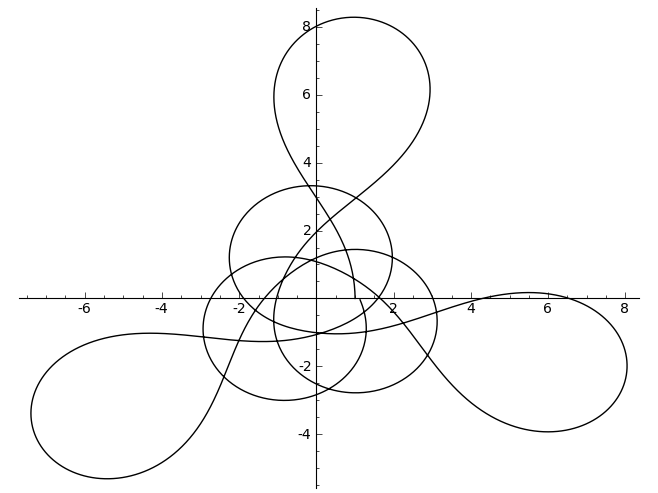}
\raisebox{3.6mm}{
\includegraphics[width=.40\textwidth]{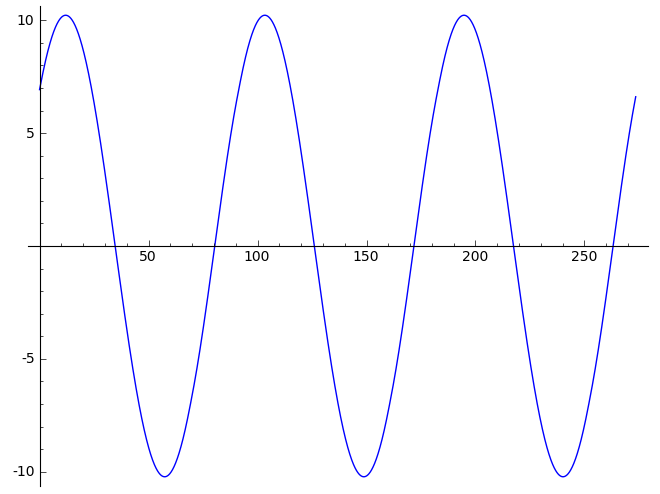}}
\end{tabular}
\caption{Projection of a periodic orbit to $xy$-plane (left).  The $z$-coordinate over time (right).}\label{periodic1}
\end{figure}

\section{The System}

\subsection{The Heisenberg Group}\label{Heisenberg}

 Consider $\mathbb R^3$ with standard $x,y,z$ coordinates,   endowed with the two vector fields
\[X=\frac{\partial}{\partial x}-\frac{1}{2}y\frac{\partial}{\partial z}, \qquad Y=\frac{\partial}{\partial y}+\frac{1}{2}x\frac{\partial}{\partial z}.\]
Then $X, Y$ span the canonical contact distribution $D$ on $\mathbb R^3$ with induced Lebesgue volume form.  Curves are called horizontal if they are tangent to $D$.
Declaring $X, Y$ orthonormal defines the standard sub-Riemannian structure on the Heisenberg group and yields the Carnot-Caratheodory metric $ds^2_{\mathbb H}=dx^2+dy^2$.  Geodesics are qualitatively helices: lifts of circles and lines in the $xy$-plane.  The horizontal constraint implies that the $z$-coordinate of a curve grows like the area traced out by the projection of the curve to the $xy$-plane. See Chapter 1 of \cite{Tour}.

We have
$$[X, Y] = \frac{\partial}{\partial z} =:Z $$
and $[X, Z] = [Y, Z] = 0$.  There are the commutation relations of the Heisenberg Lie algebra,
hence the name.  The Heisenberg group $\mathbb H$ is the simply connected Lie group
with Lie algebra the Heisenberg algebra and is diffeomorphic to  $\mathbb R^3$.
In $x,y,z$ coordinates the Heisenberg group law reads
\[(x_1,y_1,z_1)\cdot(x_2,y_2,z_2)=(x_1+x_2, y_1+y_2, z_1+z_2+\tfrac{1}{2}(x_1y_2-x_2y_1)).\]
Left multiplication is an isometry and the vector fields $X,Y$ are left invariant.

\begin{figure}
\centering
\includegraphics[height=80mm, keepaspectratio]{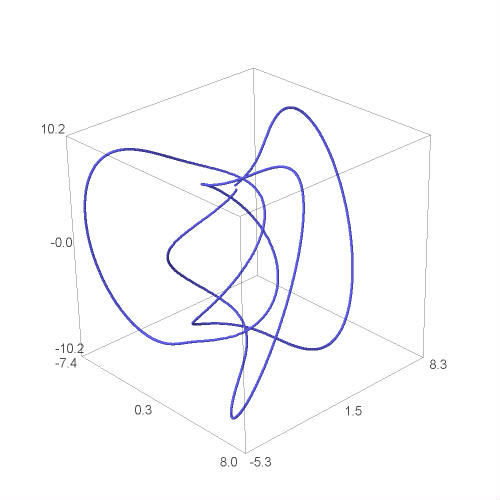}
\caption{A periodic orbit in three dimensions.}\label{periodic2}
\end{figure}

In the following we will denote the Heisenberg distance function by $d$:
\[d(p,q) = \text{the sub-Riemannian distance between points $p$ and $q$.}\]
There is no known explicit form of this function.
We will denote by $||\cdot||_{sR}$ the sub-Riemannian distance to the origin, and $||\cdot||_{\mathbb H}$ will denote the Heisenberg norm of a horizontal vector:
\begin{align*}
||p||_{sR}&=d(p, 0), \quad \quad p\in \mathbb H \\
||v||_{\mathbb H} &= \sqrt{\langle v, v \rangle}, \quad \quad v\in D_p.
\end{align*}
The sub-Riemannian gradient of a function $f \colon \mathbb H \to \mathbb R$, defined by the equation
$\langle \nabla_{sR}f, V \rangle=df(V),$ is the horizontal vector field
\[\nabla_{sR} f= X(f)X+Y(f)Y.\]

Now consider $T^*\mathbb H$ with canonical coordinates $(x,y,z,p_x,p_y,p_z)$. Then
\[P_X=p_x-\tfrac{1}{2}yp_z\quad \text{and} \quad P_Y=p_y+\tfrac{1}{2}xp_z\]
are dual momenta to $X$ and $Y,$ respectively.   Let
\[K=\tfrac{1}{2}(P_X^2+P_Y^2).\]
The function $K \colon T^*\mathbb H \to \mathbb R$ is known as the sub-Riemannian Hamiltonian, as the flow lines of its Hamiltonian vector field (symplectic gradient) are precisely the geodesics in $\mathbb H$.  As these geodesics can be thought of as trajectories of free particles, we will call the function $K$ our \emph{kinetic energy}.  While it is written here in coordinates, it can be defined canonically in terms of the cometric.

Finally, we observe that the Heisenberg group (like any Carnot group) admits dilations.  For any positive real number $\lambda$, define the map
\begin{align*} 
\delta_\lambda \colon \mathbb H &\to \mathbb H,\qquad
(x, y, z) \mapsto  (\lambda x, \lambda y, \lambda^2 z).\end{align*}
To say that this map is a dilation is to say that 
\[ d(\delta_{\lambda}(p), \delta_{\lambda}(q))=\lambda d(p,q), \quad \forall~ p,q\in \mathbb H.\]
This map lifts to a map on the cotangent bundle, which we also denote by $\delta_{\lambda}$:
\[ (x, y, z, p_x, p_y, p_z) \mapsto  (\lambda x, \lambda y, \lambda^2 z, \lambda^{-1} p_x, \lambda^{-1} p_y, \lambda^{-2} p_z).\]

\subsection{The Potential Energy}\label{potentialsection}
Recall the classical Kepler Problem on $\mathbb R^3$.  Let $r = \sqrt{x^2 + y^2  + z^2}$.  Then the Hamiltonian is 
\[ H = \tfrac{1}{2} (p_x ^2 +p_y ^2 + p_z ^2) - \frac{k}{r}.\] 
How do we characterize the potential $U=- \frac{k}{r}$?  The usual answer is that $U$ is (a constant times) the inverse of the distance function.  However, this characterization fails to provide guidance when we attempt to study the problem on spaces without an explicit distance function, such as the Heisenberg group.  A better answer is that, when $k=\frac{1}{4\pi}$, $U$ is the fundamental solution to the Laplacian on $\mathbb R^3$ (see \cite{Albouy}).  In other words, $U$ satisfies $\Delta U=\delta_0$, where $\delta_0$ is the Dirac delta function with source at 0.

Now consider the vector fields $X$ and $Y$ which form an orthonormal frame for the Heisenberg distribution.  Thinking of these as first-order differential operators, we define the Heisenberg sub-Laplacian to be the second-order subelliptic operator
$$ \Delta_{\mathbb H} = X^2+Y^2.$$
In \cite{Folland}, Folland found the fundamental solution to the Heisenberg sub-Laplacian $\Delta_{\mathbb H}$:
\[U=-\alpha\left((x^2+y^2)^2+\tfrac{1}{16}z^2\right)^{-\frac{1}{2}}. \]
In \cite{MS} we computed $\alpha=2/\pi$; for our purposes it suffices to leave $\alpha$ as a positive constant.  We recognize that this potential has a singularity at the origin $(x,y,z)=(0,0,0),$ but is smooth away from this point.  The singularity corresponds physically to our planet crashing into the sun, at which point the planet's potential energy is $-\infty$ and its kinetic energy is $+\infty$.  For this reason we will refer to a trajectory passing through the origin as a \emph{collision}.

For notational purposes, we set
\[ \mu=(x^2+y^2)^2+\tfrac{1}{16}z^2.\]
Note that Folland uses the notation $\rho=\mu^{1/4}$, which is homogeneous of degree 1 with respect to the dilation $\delta_{\lambda}$ defined in Section \ref{Heisenberg}; we will use $\rho$ later as a norm.  Then we can write
\[U=-\alpha\mu^{-1/2}=-\alpha\rho^{-2}.\]

\subsection{Equations of Motion}\label{equations}

We define our Hamiltonian $H$ in the usual way, as the sum of the kinetic and potential energies: $H=K+U$.  For this reason, we will often refer to $H$ as the \emph{energy}.  As our Hamiltonian has a singularity at the origin, this function is defined on the cotangent bundle of the Heisenberg group with the origin deleted: $$H \colon T^*(\mathbb H-\{(0,0,0)\}) \to \mathbb R.$$
Explicitly, we have
\begin{align}\label{H}
H&=\tfrac{1}{2}(P_X^2+P_Y^2)-\alpha\mu^{-\frac{1}{2}}\\
&=\tfrac{1}{2}(p_x-\tfrac{1}{2}yp_z)^2+\tfrac{1}{2}(p_y+\tfrac{1}{2}xp_z)^2-\tfrac{2}{\pi}\left((x^2+y^2)^2+\tfrac{1}{16}z^2\right)^{-\frac{1}{2}}.
\end{align}

\noindent Hamilton's equations read
\begin{align*}
\dot x=P_X, \qquad
\dot y&=P_Y, \qquad
\dot z=\frac{1}{2}xP_Y-\frac{1}{2}yP_X, \\
\dot p_x&= -\frac{1}{2}P_Yp_z-2\alpha x(x^2+y^2)\mu^{-3/2}, \\
\dot p_y&= \frac{1}{2}P_Xp_z-2\alpha y(x^2+y^2)\mu^{-3/2}, \\
\dot p_z&=-\frac{\alpha}{16}z\mu^{-3/2},
\end{align*} 
and the second derivatives of the position coordinates are
\begin{align*}
\ddot x&= \dot P_X= -P_Yp_z+\big(\tfrac{\alpha}{32}y z -2\alpha x(x^2+y^2) \big) \mu^{-3/2} \\
\ddot y &= \dot P_Y =P_Xp_z-\big(\tfrac{\alpha}{32}x z +2\alpha y(x^2+y^2) \big) \mu^{-3/2} \\
\ddot z &= \tfrac{1}{2}p_z(xp_x+yp_y)-\tfrac{\alpha}{64}(x^2+y^2)z\mu^{-3/2}.
\end{align*}

In order to prove Theorem \ref{periodic}, we consider our problem as a variational problem with subsidiary constraints.  
As usual, we define our Lagrangian $L \colon T\mathbb H \to \mathbb R$  as the difference of the kinetic and potential energies $L=K-U$.  Explicitly, we have
\[L(t,  q, {\dot q})= \tfrac{1}{2}\dot x^2 +  \tfrac{1}{2}\dot y^2+\alpha \Big( (x^2+y^2)^2+\tfrac{1}{16}z^2 \Big)^{-1/2}.
\]
(Here, and below, we write $ q=(x,y,z)$.)

Define the \emph{action} of an absolutely continuous path $\gamma \colon (0, T) \to \mathbb H$
by the functional
\[ A(\gamma) = \int_0^T L(t, \gamma, \dot \gamma) dt. \]
Here, we have the additional constraint that our solutions must be horizontal curves for our distribution.  
That is, solutions must lie on the zero set of the function  
\[G(t,  q, {\dot q})=\tfrac{1}{2}x\dot y - \tfrac{1}{2}y \dot x-\dot z.\]
The calculus of variations (see, for example, Section 12 of \cite{Gelfand}) tells us that if $\gamma$ is a minimum of the functional $A$ which also satisfies our constraint, then there exists a scalar $\lambda=\lambda(t)$ such that $\gamma$ minimizes the functional 
\[ A_{\lambda}(\gamma) = \int_0^T  L_{\lambda}(t, \gamma, \dot \gamma) dt,\]
where we have written $L_{\lambda}(t, q, \dot q)= L(t, q, \dot q)-\lambda(t)G(t, q, \dot q)$.

Next, we record the Euler-Lagrange equations 
\begin{equation}\label{EL}
\frac{\partial L_{\lambda}}{\partial q_i}- \frac{d}{dt}\bigg( \frac{\partial L_{\lambda}}{\partial \dot q_i}  \bigg)=0, 
\end{equation}
with $i=1,2,3$. 
A straightforward calculation yields:
\begin{align*}
\ddot x&=-\lambda\dot y - \tfrac{1}{2} \dot \lambda y - 2\alpha x(x^2+y^2)\mu^{-3/2} \\
\ddot y&=\lambda\dot x + \tfrac{1}{2} \dot \lambda x - 2\alpha y(x^2+y^2)\mu^{-3/2} \\
\dot \lambda &= -\tfrac{\alpha}{16}z\mu^{-3/2}.
\end{align*}
If we take $\lambda=p_z$, then these equations read
\begin{align}
\ddot x&=-\dot y p_z + \left(\tfrac{\alpha}{32}y z - 2\alpha x(x^2+y^2)\right)\mu^{-3/2} \\
\ddot y&=\dot x p_z- \left(\tfrac{\alpha}{32}x z + 2\alpha y(x^2+y^2)\right)\mu^{-3/2} \\
\dot p_z &=-\tfrac{\alpha}{16}z\mu^{-3/2}.
\end{align}
Note that these equations agree with the second derivatives given above, so that the Euler-Lagrange equations are indeed equivalent to Hamilton's equations.



\subsection{Properties}
We do not have many symmetries to work with.  In polar coordinates, the equations of motion are independent of the angular variable $\theta=\arctan(y/x)$.  Thus, the system enjoys rotational symmetry about the $z$-axis, and consequently, we find that angular momentum $p_{\theta}= xp_y-yp_x$ is conserved in time.  As this symmetry can be expressed as the invariance under an action of the compact one-dimensional Lie group $S^1$, symplectic reduction reduces the dimension of our system by one.
The only other symmetries are reflections, corresponding to actions of discrete Lie groups.

However, the dilations $\delta_{\lambda}$ correspond to an $\mathbb R^+$-action which is nearly a symmetry.  Our Hamiltonian is not preserved, but is homogeneous of degree $-2$:
\[\delta_\lambda \colon H \mapsto \lambda^{-2}H.
\]
More precisely, the dilations are generated by the function $J=xp_x+yp_y+2zp_z$ which satisfies $\dot J =2H$.  Thus, $J$ is an integral of motion for orbits with zero total energy.  This (with the fact that $\dot H=0$ by construction) implies that the system is integrable on the invariant hypersurface $\{H=0\}$ (see \cite{MS}).  This hypersurface is precisely where our periodic orbits live.

\begin{prop}
Periodic orbits must have zero energy.
\end{prop}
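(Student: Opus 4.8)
The plan is to use the dilation generator $J=xp_x+yp_y+2zp_z$ as a virial function, exploiting the identity $\dot J=2H$ recorded above. The point is that along a genuine periodic orbit both $J$ and $H$ are especially well-behaved: since $H$ is autonomous it is conserved, $\dot H=0$, so that $H$ is constant along the orbit; and $J$, being a smooth function on phase space, must return to its initial value after one period. This is precisely the Heisenberg analogue of the classical virial theorem, made unusually clean by the fact that $H$ is homogeneous of degree $-2$ under $\delta_\lambda$.

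Concretely, I would argue as follows. Suppose $\gamma(t)$ is a periodic orbit of period $T>0$. Since $\gamma$ is a closed curve in the phase space $T^*(\mathbb H-\{0\})$, every phase coordinate $(x,y,z,p_x,p_y,p_z)$ returns to its value at $t=0$ after time $T$; in particular $J(\gamma(T))=J(\gamma(0))$. Integrating the identity $\dot J=2H$ over one period and using $\dot H=0$ then gives
\[
0 = J(\gamma(T))-J(\gamma(0)) = \int_0^T \dot J\,dt = \int_0^T 2H\,dt = 2HT.
\]
Because $T>0$, we conclude $H=0$.

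The computation is short, so the main (and quite mild) obstacle is simply to verify that the hypotheses under which $\dot J=2H$ and $\dot H=0$ hold are genuinely in force along the orbit. Since the Hamiltonian is undefined at the origin, the identity $\dot J=2H$ is valid only away from collisions; I would therefore note that a periodic orbit, avoiding the collision singularity by definition, remains in the open set where $H$ is smooth, so that $J$ is differentiable along $\gamma$ and the fundamental theorem of calculus applies on $[0,T]$. Once this is in place, no further calculation is required beyond invoking the previously established relation $\dot J=2H$.
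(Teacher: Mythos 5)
Your proposal is correct and follows essentially the same route as the paper: both exploit the periodicity of $J=xp_x+yp_y+2zp_z$ together with the identity $\dot J=2H$ and the constancy of $H$ to force $H=0$. Your version simply makes the final step explicit by integrating over one period, whereas the paper phrases it as the impossibility of $J$ being monotone along a closed orbit.
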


\begin{proof}
If $\gamma (t)=(x(t), y(t), z(t), p_x(t), p_y(t), p_z(t))$ satisfies $\gamma(0)=\gamma(T)$ for some $t=T$, then $J=xp_x+yp_y+2zp_z$ is also periodic; that is, $J(\gamma(0))=J(\gamma(T))$.  But we know that the time derivative of $J$ is constant, given by $\dot J=2H$.  Since $J$ cannot be monotonically increasing nor decreasing in time, we must have $\dot J=2H=0$.
\end{proof}

\section{Existence of Periodic Solutions}\label{existence}
In this section we prove our main theorem: there exist periodic orbits in the Kepler-Heisenberg problem.
These orbits were originally found by numerical experiment.  To prove their existence we employ the direct method in the calculus of variations, showing the existence of an action minimizing orbit with prescribed symmetry.  We prove the existence of solutions with $k$-fold rotational symmetry for any odd integer $k\geq 3$.
Approximations of one such orbit, with $k=3$, are shown in Figures \ref{periodic1} and \ref{periodic2}.  

\begin{theorem}\label{periodic}
Periodic solutions exist.  For any odd integer $k\geq 3$, there exists a periodic orbit with $k$-fold rotational symmetry about the $z$-axis.
\end{theorem}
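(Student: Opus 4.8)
The plan is to obtain these orbits as action minimizers within a class of loops carrying the prescribed symmetry, exactly the direct method of \cite{Gordon, CM}. The key structural input is that rotation about the $z$-axis by $2\pi/k$, written $\mathcal R\colon(x,y,z)\mapsto(R_{2\pi/k}(x,y),z)$, is an isometry of $\mathbb H$ preserving horizontality and fixing the potential, which depends only on $x^2+y^2$ and $z^2$. Fixing a period $T$, I would let $\Lambda$ be the Hilbert manifold of $T$-periodic, absolutely continuous horizontal loops with $(x,y)\in H^1$, and impose the constraint $\gamma(t+T/k)=\mathcal R(\gamma(t))$; since $\mathcal R^k=\mathrm{id}$ this is compatible with $T$-periodicity, and because $\mathcal R$ fixes $z$ it already forces $z$ to be $T$-periodic. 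By Palais' principle of symmetric criticality, a minimizer of the action $A$ over this symmetric subclass $\Lambda^{\mathcal R}$ is a genuine critical point, hence a solution of the equations of motion of Section \ref{equations} with the desired $k$-fold symmetry.

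The favorable feature of this problem is the homogeneity of the potential. Since $L=K-U=\tfrac12(\dot x^2+\dot y^2)+\alpha\mu^{-1/2}$ is a sum of nonnegative terms we have $A\ge0$, the kinetic part is convex and weakly lower semicontinuous, and along a weakly convergent minimizing sequence the uniform convergence of the projections lets Fatou's lemma handle the potential term. More importantly, writing $\rho=\mu^{1/4}$ for Folland's gauge, we have $-U=\alpha\rho^{-2}$, homogeneous of degree $-2$ under $\delta_\lambda$. A short computation using the horizontality constraint shows that $|\dot\rho|\le C\sqrt{\dot x^2+\dot y^2}$ in a neighborhood of the origin, so on any interval along which $\rho\to0$ the elementary bound
\[
\tfrac12(\dot x^2+\dot y^2)+\alpha\rho^{-2}\ \ge\ \frac{\sqrt{2\alpha}}{C}\,\frac{|\dot\rho|}{\rho}\ =\ \frac{\sqrt{2\alpha}}{C}\,\Bigl|\frac{d}{dt}\log\rho\Bigr|
\]
forces the action to diverge. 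Thus collision loops have infinite action; this is the degree $-2$ \emph{strong force} phenomenon, and it means that any finite-action minimizer automatically avoids the origin and is smooth there, sidestepping the delicate collision estimates that the Newtonian case requires.

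The hard part, and the step I expect to be the true obstacle, is compactness of minimizing sequences. The action is bounded below by $0$, but a priori a minimizing sequence could fail to converge to a nonconstant loop in two ways: by escaping to infinity, or, more insidiously, by collapsing toward the $z$-axis at large height, where $-U\approx 4\alpha/|z|\to0$ and the singularity exerts no restraining force. Unlike the planar Kepler problem, one cannot exclude this with topology: the punctured configuration space $\mathbb H\setminus\{0\}\cong\mathbb R^3\setminus\{0\}$ is simply connected, so there is no nontrivial free homotopy class in which to minimize. The entire burden of coercivity therefore falls on the symmetry, and this is precisely where the parity of $k$ must enter: I would argue that for odd $k$ the interaction of the rotational constraint $\gamma(t+T/k)=\mathcal R(\gamma(t))$ with the horizontality constraint (equivalently, vanishing of the enclosed signed area, which is what closes $z$) prevents a symmetric loop from degenerating onto the axis, forcing the infimum of $A$ over $\Lambda^{\mathcal R}$ to be strictly positive and attained.

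Two further points round out the argument. First, because the system is only scale-covariant---$\delta_\lambda$ multiplies $H$ by $\lambda^{-2}$, and the Proposition above shows every periodic orbit has zero energy---the size of the orbit and its period are linked by the dilation, so fixing $T$ entails no loss of generality and the family $\delta_\lambda$ then produces orbits of every scale. Second, one must verify that the minimizer does not secretly acquire more symmetry than prescribed, or collapse to a configuration such as a planar circle, which can be neither horizontal nor closed, so that it realizes exactly $k$-fold and not finer symmetry. I expect this non-degeneracy check, together with the parity bookkeeping of the previous paragraph, to be the most delicate part of the write-up.
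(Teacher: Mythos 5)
Your overall architecture (direct method, strong-force collision exclusion, Palais' symmetric criticality, Lagrange multipliers for horizontality) matches the paper's, and you correctly identify compactness of minimizing sequences as the crux. But there is a genuine gap exactly there: the rotational constraint $\gamma(t+T/k)=R_{2\pi/k}\gamma(t)$ alone does \emph{not} give coercivity, and no amount of ``interaction with horizontality'' will rescue it. The constant loops $\gamma_n\equiv(0,0,n)$ are horizontal, $T$-periodic, and fixed by $R_{2\pi/k}$, hence lie in your class $\Lambda^{\mathcal R}$, and their action is $T\cdot 4\alpha/|n|\to 0$. So with only the rotational symmetry the infimum of $A$ is $0$ and is not attained by a nonconstant loop; the minimizing sequence escapes to infinity along the $z$-axis, which is precisely the failure mode you flagged but did not close. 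The missing ingredient in the paper is a second, independent symmetry condition, the $z$-antisymmetry
\[
z(t+T/2)=-z(t),
\]
imposed alongside the rotation. This is what controls the vertical direction: it forces $z(0)=-z(T/2)$, so the horizontality estimate $|z(T/2)-z(0)|\le\tfrac12 R\,l(\gamma)$ (with $R$ the radius bound on the planar projection coming from the inscribed $k$-gon argument) pins $|z(0)|\le\tfrac14 R\,l(\gamma)$ and hence bounds the whole curve in terms of the action. It also eliminates the degenerate constant loops, since the only constant curve satisfying both symmetries is the origin itself, which is a collision of infinite action.

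This also corrects your account of where the parity of $k$ enters. It is not that odd $k$ prevents collapse onto the axis; rather, oddness is a \emph{compatibility} condition between the two symmetries. If $k$ is even, the composite of the rotational and reflectional conditions forces $z\equiv 0$, and planar solutions are known to suffer collision, so the class would contain no admissible minimizer. For odd $k$ the two conditions coexist (giving $z(t+mT/2k)=(-1)^m z(t)$ for odd $m$), and the rest of your outline then goes through essentially as in the paper: the $k$-gon perimeter bound gives uniform boundedness of the planar projections, Arzela--Ascoli and Banach--Alaoglu give a limit curve, Fatou and weak lower semicontinuity give minimality, and the strong-force estimate (which the paper derives via the Hamilton--Jacobi equation for the sub-Riemannian distance, morally the same as your $|\dot\rho|\lesssim\|\dot\gamma\|_{\mathbb H}$ bound) rules out collision. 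Your final worry about the minimizer acquiring extra symmetry is not needed for the theorem as stated, which asserts only that the orbit \emph{has} $k$-fold symmetry, not that this symmetry is exact.
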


\noindent\textbf{Proof of Theorem \ref{periodic}.}
We first sketch the structure of the proof, which uses the direct method from the calculus of variations.  For similar applications of this technique to celestial mechanics problems, see \cite{CM} and \cite{Gordon}. Each step below is presented in a separate subsection.\\

\noindent \textit{Step 1}: Choose a nice function space $\mathcal F_k$ whose members are closed loops enjoying the desired symmetry properties.  Choose a minimizing sequence of curves in $\mathcal F_k$ whose action approaches the infimum of the action restricted to $\mathcal F_k$.\\
\textit{Step 2}:  Using Arzela-Ascoli, show $\gamma_n$ has a $C^0$-convergent subsequence converging to some $\gamma_*$.  Using Banach-Alaoglu, show $\gamma_n \rightharpoonup \gamma_* \in \mathcal F_k$. \\
\textit{Step 3}: Show that $\gamma_*$ realizes the infimum of the action restricted to $\mathcal F_k$.  Use Fatou's Lemma and standard functional analysis.\\
\textit{Step 4}:  Prove that $\gamma_*$ does not suffer a collision.  Use the Hamilton-Jacobi equation.\\
\textit{Step 5}: Show $dA(\gamma_*)(e)=0$ for horizontal variations $e$.  Standard analysis gives $dA(\gamma_*)|_{\mathcal F_k} =0$, then use the Principal of Symmetric Criticality.\\
\textit{Step 6}: Show $\gamma_*$ satisfies the Euler-Lagrange equations, and consequently, Hamilton's equations.  This is the Principle of Least Action.

\subsection{The Function Space and a Minimizing Sequence}\label{functionspace}
For a curve $\gamma(t)=(x(t), y(t), z(t))$ in the Heisenberg group parametrized by the interval $[0,T]$, let $\tilde \gamma$ denote its projection to the $xy$-plane.  Let $k\geq 3$ be any odd\footnote{If $k$ is even, the two symmetry conditions force $z$ to be identically zero; such solutions are known (see \cite{MS}) to suffer collisions.} positive integer.
We will restrict our attention to horizontal curves satisfying the symmetry conditions
\begin{align*} \tag{S1}
 \gamma(t+ T/k)&=R_{2\pi/k}  \gamma(t)\\
 \tag{S2}
 z(t+ T/2)&=-z(t)
\end{align*}
where 
\[R_{2\pi/k}=\begin{bmatrix} \cos(2\pi/k) & -\sin(2\pi/k) & 0\\ \sin(2\pi/k) & \cos(2\pi/k) & 0\\0 & 0 & 1 \end{bmatrix}  \]
denotes rotation about the $z$-axis by $2\pi/k$ radians counterclockwise. 
Note that curves satisfying condition (S1) are necessarily periodic.  Also, note that the two symmetry conditions together give the $z$-coordinate symmetry
\[ z(t+mT/2k)=(-1)^mz(t),\]
for any odd integer $m$.

We will work in the function space
\[\mathcal F_k=\{\gamma \in H^{1}(S^1, \mathbb H) \ | \ \gamma \ \text{horizontal and satisfies}\  \text{(S1) and (S2)}  \},\] 
where $H^{1}(S^1, \mathbb H)=W^{1,2}(S^1, \mathbb H)$ is the completion of the space of all absolutely continuous paths in $\mathbb H$ whose derivative is square integrable\footnote{Here and in the sequel $S^1$ denotes the interval $[0,T]$ with endpoints identified.}. The usual $H^{1}(S^1, \mathbb R^3)$ norm is 
\[ ||\gamma||_{H^1}= \sqrt{\int_0^T(||\dot \gamma(t)||_E^2+||\gamma(t)||_E^2)dt },
\]
where $||\cdot||_E$ denotes the usual Euclidean norm on $\mathbb R^3$.  This norm endows $H^{1}(S^1, \mathbb R^3)$ with a Hilbert space structure.

To endow $\mathcal F_k$ with a Hilbert structure, we make the following identification:
\[ \mathcal L^2([0,T], \mathbb R^2) \times \mathbb H \cong \mathcal H:=\{\text{horizontal square-integrable paths in } \mathbb H\},\]
where $\mathbb H$ is identified with the vector space $\mathbb R^3$.
This isomorphism sends $((f, g), x_0)$ to the horizontal curve $\gamma$ which solves the initial value problem
\[ \dot \gamma= fX+gY, \quad \gamma(0)=x_0.
\]
The existence and uniqueness of the solution $\gamma$ is guaranteed by Theorem D.1 of \cite{Tour}.  This theorem also shows that this mapping is invertible for $x_0$ in some compact set.  Thus, we can think of $f,g$ as coordinates on the subspace of $\mathcal H$ consisting of all paths with a fixed starting point.  Consequently, $\mathcal F_k$ is equipped with a vector space structure.

Here we will endow $\mathcal F_k$ with a norm similar to the $H^1$ norm, but slightly modified for our purposes:
\begin{equation} \label{Fnorm}
||\gamma||_{\mathcal F_k}:= \sqrt{\int_0^T(||\dot \gamma(t)||^2_{\mathbb H} + ||\gamma(0)||^2_{E}) dt }.
\end{equation}

\begin{remark}
Here we have written $||\dot \gamma(t)||^2_{\mathbb H} = \langle \dot \gamma(t), \dot \gamma(t) \rangle$.  Note that the ``norm''
\[||\gamma||= \sqrt{\int_0^T(||\dot \gamma(t)||^2_{\mathbb H} + ||\gamma(0)||^2_{sR}) dt }
\]
is not a vector space norm.  It satisfies $|| \delta_{\lambda}\gamma ||=|\lambda|\ || \gamma ||$ just as $||p||_{sR}=d(p, 0)$ satisfies $|| \delta_{\lambda}(p) ||_{sR}=|\lambda|\ || p ||_{sR}$ for $p\in \mathbb H$.
However, it does induce a genuine distance function $d(\gamma_1, \gamma_2)=|| \gamma_1-\gamma_2 ||$.  Also note that the Euclidean topology on $\mathbb H$ is the same as the topology induced by the Carnot-Caratheodory metric, so a set in $\mathbb H$ is bounded in the Carnot-Caratheodory metric if and only if it is bounded in the Euclidean metric. 
Finally, we shall also make use of the standard $\mathcal L^2$ norm
\[||\dot \gamma||^2_{\mathcal L^2} = \sqrt{\int_0^T||\dot \gamma(t)||^2_{\mathbb H}dt}. \]

\end{remark}

\begin{prop}[Coercivity] \label{coercivity}
The squared length of $\tilde \gamma$ is bounded above by twice the action of $\gamma$. 
\end{prop}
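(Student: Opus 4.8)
The plan is to exploit the single most important structural feature of the Lagrangian: since the Kepler potential $U=-\alpha\mu^{-1/2}$ is everywhere negative, the term $-U=\alpha\mu^{-1/2}$ appearing in $L=K-U$ is \emph{nonnegative}. Consequently the action never undercounts the kinetic energy, so it will suffice to bound the squared length of $\tilde\gamma$ by the integrated kinetic energy alone, discarding the potential. This is what makes the estimate essentially free.

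First I would rewrite the length of the planar projection in terms of the Heisenberg speed. Because $\gamma$ is horizontal, the fact (noted when the sub-Riemannian structure $ds^2_{\mathbb H}=dx^2+dy^2$ was introduced) that a horizontal curve has the same length as its $xy$-projection gives
\[
\ell(\tilde\gamma)=\int_0^T\sqrt{\dot x^2+\dot y^2}\,dt=\int_0^T\|\dot\gamma(t)\|_{\mathbb H}\,dt,
\]
so that $\ell(\tilde\gamma)$ is exactly the $\mathcal L^1$-norm of the speed $\|\dot\gamma\|_{\mathbb H}$. Next I would drop the potential and apply Cauchy-Schwarz. Writing $K=\tfrac12\|\dot\gamma\|_{\mathbb H}^2$ and using $-U\geq 0$ gives
\[
A(\gamma)=\int_0^T\big(K-U\big)\,dt\;\geq\;\int_0^T K\,dt\;=\;\tfrac12\int_0^T\|\dot\gamma\|_{\mathbb H}^2\,dt,
\]
and Cauchy-Schwarz (Hölder with exponents $2,2$) on $[0,T]$ then yields
\[
\ell(\tilde\gamma)^2=\Big(\int_0^T\|\dot\gamma\|_{\mathbb H}\,dt\Big)^2\leq T\int_0^T\|\dot\gamma\|_{\mathbb H}^2\,dt\leq 2T\,A(\gamma).
\]
Normalizing the period to $T=1$ (which one may arrange, the dilations $\delta_\lambda$ providing the rescaling) recovers the stated bound $\ell(\tilde\gamma)^2\leq 2A(\gamma)$; for general period the constant is $2T$.

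There is essentially no hard step here: the proposition is a soft consequence of the sign of the Kepler potential together with one application of Cauchy-Schwarz. The only point that deserves care is the first equality, namely that for a horizontal $\gamma$ the planar arclength and the sub-Riemannian speed coincide, which is built into the definition of the Heisenberg metric and the horizontality constraint rather than something to be verified anew. The genuine payoff comes afterward: this inequality is precisely the coercivity estimate needed in Step~2, since a uniform bound on the action along a minimizing sequence forces a uniform bound on the lengths of the projected loops $\tilde\gamma_n$, which in turn feeds the Arzel\`a-Ascoli and Banach-Alaoglu compactness arguments used to extract a convergent subsequence.
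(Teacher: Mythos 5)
Your proof is correct and follows essentially the same route as the paper's: discard the nonnegative $-U$ term so that the action dominates the integrated kinetic energy, identify the planar arclength with the $\mathcal L^1$-norm of the Heisenberg speed, and apply Cauchy--Schwarz. You are in fact slightly more careful than the paper, which writes $\int_0^T(\dot x^2+\dot y^2)\,dt\geq\bigl(\int_0^T\sqrt{\dot x^2+\dot y^2}\,dt\bigr)^2$ and thereby silently drops the factor of $T$ coming from Cauchy--Schwarz; your observation that the general bound is $l(\tilde\gamma)^2\leq 2T\,A(\gamma)$, with the constant $2$ recovered only after normalizing $T=1$, is the honest form of the estimate.
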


\begin{proof}
Let $\tilde \gamma$ denote the projection of $\gamma(t)=(r(t), \theta(t), z(t))$ to the $r, \theta$ plane, where $r=\sqrt{x^2+y^2}$ and $\theta=\arctan(y/x)$ are standard polar coordinates on the plane.  Then we compute
\begin{align*}
A(\gamma) &= \int_0^T L(t, \gamma, \dot \gamma) dt \\
&= \int_0^T \big(\tfrac{1}{2}\dot r^2 + \tfrac{1}{2} r^2 \dot \theta^2 + \alpha(r^4+\tfrac{1}{16}z^2)^{-1/2}\big)dt \\
&\geq\int_0^T \big(\tfrac{1}{2}\dot r^2 + \tfrac{1}{2} r^2 \dot \theta^2 \big) dt \\
&= \tfrac{1}{2} \int^T_0 (\dot x^2 +\dot y^2) dt \\
& \geq \tfrac{1}{2} \left( \int_0^T \sqrt{\dot x^2 +\dot y^2} dt\right)^2 \\
&= \tfrac{1}{2} \big(l ( \gamma) \big)^2,
\end{align*}
where we used the Cauchy-Schwarz inequality in $\mathcal L^2(\mathbb R)$.  
\end{proof}

Now suppose $\{\gamma_n\}_{n\in \mathbb N}$ is a minimizing sequence in $\mathcal F_k$.  That is, suppose 
\[ \lim_{n\to \infty} A(\gamma_n)= \inf_{\gamma \in \mathcal F_k} A(\gamma). \]
We may discard finitely many terms of the sequence and assume that there exists some large $M>0$ such that 
\[A(\gamma_n) \leq M\] 
for all $n$.  Note that the previous Proposition implies that the lengths $l(\gamma_n)$ are bounded; specifically, 
$l(\gamma_n) \leq \sqrt{2M}$
for all $n$.



\subsection{The Potential Solution}
We will denote the usual Euclidean norm by $||\cdot||_E$ and the corresponding Euclidean distance function by $d(\cdot, \cdot)_E$.  Note that  $d(\cdot, \cdot)_E\leq d(\cdot, \cdot)$ in general, but that both the Euclidean and sub-Riemannian distances agree when restricted to the $xy$-plane.

\begin{prop}\label{projbdd}
The projections $\tilde \gamma_n$ are uniformly bounded.
\end{prop}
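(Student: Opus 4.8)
The plan is to exploit the $k$-fold rotational symmetry (S1) together with the length bound already in hand to control \emph{every} point of the projected curve. From Proposition~\ref{coercivity} and the standing action bound $A(\gamma_n)\le M$, the sub-Riemannian lengths satisfy $l(\gamma_n)\le\sqrt{2M}$; and since the length of a Heisenberg curve equals the Euclidean length of its planar projection, the same bound holds for $\tilde\gamma_n$. The conceptual point to appreciate first is that a length bound alone does \emph{not} give boundedness, because a short curve can sit arbitrarily far from the origin. The symmetry is exactly what forbids this, and locating the curve near the origin is the real content of the proposition.

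First I would observe that, because $R_{2\pi/k}$ is a Euclidean isometry, condition (S1) forces the speed $||\dot{\tilde\gamma}_n(t)||_E$ to be periodic with period $T/k$. Consequently the arc of $\tilde\gamma_n$ over any parameter interval of length $T/k$ has length exactly $l(\gamma_n)/k$; even the cruder estimate that this arc is at most $l(\gamma_n)\le\sqrt{2M}$ would suffice for what follows.

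Next, fix any $t_0$ and compare the two points $\tilde\gamma_n(t_0)$ and $\tilde\gamma_n(t_0+T/k)=R_{2\pi/k}\tilde\gamma_n(t_0)$, both of which lie on the projected curve. Writing $\rho=||\tilde\gamma_n(t_0)||_E$, elementary planar geometry gives the chord length
\[
||\tilde\gamma_n(t_0)-R_{2\pi/k}\tilde\gamma_n(t_0)||_E = 2\rho\sin(\pi/k).
\]
Since the straight-line distance between two points on a curve never exceeds the arc length joining them, this chord is bounded by $l(\gamma_n)/k\le\sqrt{2M}/k$. Solving for $\rho$ yields
\[
||\tilde\gamma_n(t_0)||_E \le \frac{\sqrt{2M}}{2k\sin(\pi/k)},
\]
and because $k\ge 3$ the quantity $\sin(\pi/k)$ is a fixed positive constant. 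As $t_0$ and $n$ were arbitrary, this single inequality is already the desired uniform bound on the projections $\tilde\gamma_n$.

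The only genuine subtlety is the realization that the length estimate from coercivity must be \emph{coupled} with the rotational symmetry to pin down the location of the orbit; neither ingredient alone suffices. Once that is recognized, the argument reduces to the chord-versus-arc comparison above and requires no delicate analysis. I would note in passing that this is the step where the hypothesis $k\ge 3$ (rather than $k\ge 1$) is used, since $\sin(\pi/k)$ must be bounded away from zero.
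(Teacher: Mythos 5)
Your proof is correct and rests on the same key idea as the paper's: comparing the chord between a point of $\tilde\gamma_n$ and its image under $R_{2\pi/k}$ against the connecting arc, whose length is controlled by the coercivity bound $l(\gamma_n)\le\sqrt{2M}$. The paper applies this comparison only at $t_0=0$ (via the perimeter of the regular $k$-gon formed by the points $\gamma_n(jT/k)$) and then adds $l(\gamma_n)$ to reach the remaining points, whereas your pointwise version at arbitrary $t_0$ bounds every point of the projection directly and dispenses with that final triangle-inequality step.
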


\begin{proof}
Since $\gamma_n \in \mathcal F_k$, it is horizontal, so the length of $\gamma_n$ is equal to that of $\tilde \gamma_n$.  Also, condition (S1) implies that \[z(0)=z(T/k)=z(2T/k)=\cdots=z((k-1)T/k)=z(T)\] and thus that the $k$ points 
\[\gamma_n(0)=\gamma_n(T),\ \gamma_n(T/k),\ \gamma_n(2T/k), \dots,\ \gamma_n((k-1)T/k)\]
form a regular $k$-gon in the plane $z=z(0)$ centered at the point $(0,0,z(0))$.  See Figure \ref{polygons} for a rendering of the $k=5$ case. Then we have:
\begin{align*} 
l(\gamma_n) 
&\geq d(\gamma_n(0), \gamma_n(\tfrac{T}{k})) + d(\gamma_n(\tfrac{T}{k}), \gamma_n(\tfrac{2T}{k})) + \cdots +d(\gamma_n(\tfrac{(k-1)T}{k}), \gamma_n(T)) \\
&\geq d_E(\gamma_n(0), \gamma_n(\tfrac{T}{k})) + d_E(\gamma_n(\tfrac{T}{k}), \gamma_n(\tfrac{2T}{k})) + \cdots +d_E(\gamma_n(\tfrac{(k-1)T}{k}),\gamma_n(T)) \\
&= d_E(\tilde\gamma_n(0), \tilde\gamma_n(\tfrac{T}{k})) + d_E(\tilde\gamma_n(\tfrac{T}{k}), \tilde\gamma_n(\tfrac{2T}{k})) + \cdots +d_E(\tilde\gamma_n(\tfrac{(k-1)T}{k}), \tilde\gamma_n(T)) \\
&= C||\tilde \gamma_n(0)||_E \\
&= C||\tilde \gamma_n(0)||_{sR},
\end{align*}
where $C=2k\sin(2\pi/k)$ and the penultimate equality is given by the usual perimeter of a regular $k$-gon inscribed in a circle of radius $||\tilde \gamma_n(0)||_E.$

Then since $l(\gamma_n)=l(\tilde \gamma_n)$, we find
\begin{align*}
||\tilde\gamma_n(t)||_{sR} &\leq  ||\tilde\gamma_n(0)||_{sR} + l(\tilde\gamma_n) \\
&\leq (\tfrac{1}{C}+1)l(\gamma_n) \\ &\leq (\tfrac{1}{C}+1)\sqrt{2M}.\end{align*}

\end{proof}

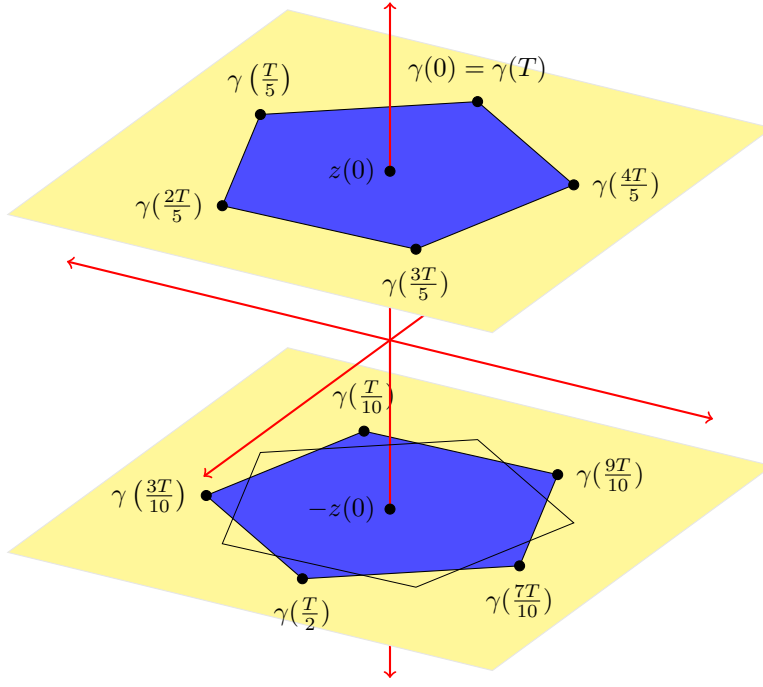
\begin{figure}
\begin{center}

\tdplotsetmaincoords{65}{30}
\begin{tikzpicture}[scale=1.24, tdplot_main_coords]

\draw[thick,red] (0,0,0) -- (0,0,2);
\draw[thick,red,<-] (0,4,0) -- (0,0,0);
\draw[thick,red] (0,0,0) -- (0,0,-2);
\draw[thick,red, ->] (0,0,-2) -- (0,0,-4);

\draw[draw=gray!20, fill=yellow!50, opacity=100] (3,3,2)--(-3,3,2)--(-3,-3,2)--(3,-3,2)--(3,3,2)--cycle;

\draw[draw=gray!20, fill=yellow!50, opacity=100] (3,3,-2)--(-3,3,-2)--(-3,-3,-2)--(3,-3,-2)--(3,3,-2)--cycle;

\draw[thick,red,<->] (-4,0,0) -- (4,0,0);
\draw[thick,red,->] (0,0,0) -- (0,-4,0);

\coordinate (up) at (0,0,2);

\begin{scope}[shift=(up)]
\foreach \x in {1,2,...,5}{
\coordinate (U-\x) at (\x*72+20:2);
}
\end{scope}

\draw[fill=blue!70,opacity=100] (U-1)--(U-2)--(U-3)--(U-4)--(U-5)--cycle;

\node[draw,circle,fill,scale=0.4] at (U-1){};
\node[label={$\gamma(0)=\gamma(T)$}] at (U-1) {};

\node[draw,circle,fill,scale=0.4] at (U-2){};
\node[label=$\gamma\left(\frac{T}{5}\right)$] at (U-2) {};

\node[draw,circle,fill,scale=0.4] at (U-3){};
\node[label=left:$\gamma(\tfrac{2T}{5})$] at (U-3) {};

\node[draw,circle,fill,scale=0.4] at (U-4){};
\node[label=below:$\gamma(\tfrac{3T}{5})$] at (U-4) {};

\node[draw,circle,fill,scale=0.4] at (U-5){};
\node[label=right:$\gamma(\tfrac{4T}{5})$] at (U-5) {};

\coordinate (down) at (0,0,-2);
\begin{scope}[shift=(down)]
\foreach \x in {1,2,...,5}{
\coordinate (D-\x) at (\x*72+56:2);
}
\end{scope}

\draw[fill=blue!70,opacity=100] (D-1)--(D-2)--(D-3)--(D-4)--(D-5)--cycle;

\node[draw,circle,fill,scale=0.4] at (D-1){};
\node[label=$\gamma(\frac{T}{10})$] at (D-1) {};

\node[draw,circle,fill,scale=0.4] at (D-2){};
\node[label=left:$\gamma\left(\frac{3T}{10}\right)$] at (D-2) {};

\node[draw,circle,fill,scale=0.4] at (D-3){};
\node[label=below:$\gamma(\tfrac{T}{2})$] at (D-3) {};

\node[draw,circle,fill,scale=0.4] at (D-4){};
\node[label=below:$\gamma(\tfrac{7T}{10})$] at (D-4) {};

\node[draw,circle,fill,scale=0.4] at (D-5){};
\node[label=right:$\gamma(\tfrac{9T}{10})$] at (D-5) {};

\draw[thick,red,->] (0,0,2) -- (0,0,4);
\draw[thick,red] (0,0,0) -- (0,0,-2);
\node[draw, circle, fill, scale=0.4, label=left:$z(0)$] at (0,0,2) {};
\node[draw, circle, fill, scale=0.4, label=left:$-z(0)$] at (0,0,-2) {};

\begin{scope}[shift=(down)]
\foreach \x in {1,2,...,5}{
\coordinate (U-\x) at (\x*72+20:2);
}
\end{scope}

\draw[very thin]
 (U-1)--(U-2)--(U-3)--(U-4)--(U-5)--cycle;

\end{tikzpicture}
\end{center}

\caption{Symmetries of a path in $\mathcal F_5$.}
\label{polygons}
\end{figure}

\begin{lemma}\label{zbound}
Suppose $\gamma \colon [0,S] \to \mathbb H$ is horizontal.  Suppose $c$, the projection of $\gamma$ to the $xy$-plane, satisfies $||c(t)||_E\leq R$ for some $R>0$ and all $t \in [0,S]$.  Then 
\[|z(S)-z(0)| \leq \tfrac{1}{2}Rl(\gamma).
\]
\end{lemma}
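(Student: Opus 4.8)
<br>

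The lemma bounds the change in the $z$-coordinate of a horizontal curve in terms of the length of the curve and the maximal distance of its planar projection from the origin. The key is the horizontality constraint, which I will exploit directly.

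The plan is to start from the horizontal constraint satisfied by $\gamma$, namely $\dot z = \tfrac{1}{2}(x\dot y - y \dot x)$. This expresses $\dot z$ as (one half of) the signed area element swept by the planar projection $c(t) = (x(t), y(t))$, or equivalently as one half of the cross product $c \times \dot c$. Integrating, I would write
\[
z(S) - z(0) = \int_0^S \dot z \, dt = \tfrac{1}{2} \int_0^S (x \dot y - y \dot x)\, dt.
\]
Taking absolute values and applying the triangle inequality for integrals gives
\[
|z(S) - z(0)| \leq \tfrac{1}{2} \int_0^S |x \dot y - y \dot x|\, dt.
\]

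The main step is then a pointwise estimate on the integrand. I would recognize $x\dot y - y \dot x$ as the determinant $\det\!\big(c(t), \dot c(t)\big)$, whose absolute value is bounded by $\|c(t)\|_E \, \|\dot c(t)\|_E$ (this is just Cauchy-Schwarz applied to $c$ and the rotation of $\dot c$ by ninety degrees, or the fact that the area of a parallelogram is at most the product of its side lengths). Using the hypothesis $\|c(t)\|_E \leq R$, I get
\[
|x\dot y - y \dot x| \leq R\, \|\dot c(t)\|_E.
\]
Substituting into the integral yields
\[
|z(S) - z(0)| \leq \tfrac{1}{2} R \int_0^S \|\dot c(t)\|_E \, dt = \tfrac{1}{2} R\, l(c).
\]

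Finally I would invoke the fact, established earlier in the excerpt, that for a horizontal curve the length equals the length of its planar projection, so $l(c) = l(\gamma)$, giving the claimed bound $|z(S) - z(0)| \leq \tfrac{1}{2} R\, l(\gamma)$. I do not expect any genuine obstacle here: the only subtlety is regularity, since $\gamma$ lies in $H^1$ rather than being smooth, but absolute continuity of the components is exactly what is needed to justify the Fundamental Theorem of Calculus and the integral manipulations, and this is already built into the function space. The argument is essentially a one-line application of the isoperimetric-type bound on the swept area, packaged through the horizontality relation.
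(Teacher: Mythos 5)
Your proof is correct and follows the same underlying idea as the paper's: integrate the horizontality constraint $\dot z=\tfrac12(x\dot y-y\dot x)$, recognize the integrand as $\langle c, I(\dot c)\rangle_E$ with $I$ a rotation by ninety degrees, and bound it via Cauchy--Schwarz using $\|c\|_E\le R$. The only difference is where Cauchy--Schwarz is applied: the paper applies it in $\mathcal L^2([0,S])$ to the pair $(\|c\|_E,\|I(\dot c)\|_E)$, which forces a preliminary reduction to constant-speed parametrization in order to convert $\sqrt{S\int_0^S\|\dot c\|_E^2\,dt}$ back into $l(\gamma)$, whereas you apply it pointwise in $\mathbb R^2$, getting $|x\dot y-y\dot x|\le \|c\|_E\,\|\dot c\|_E\le R\,\|\dot c\|_E$ and then integrating directly to $\tfrac12 R\,l(c)$. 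Your route is marginally cleaner: it needs no reparametrization, works for any absolutely continuous parametrization as written, and the final identification $l(c)=l(\gamma)$ for horizontal curves is exactly as in the paper. No gaps.
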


\begin{proof}
Let $I(v_1, v_2)=(v_2, -v_1).$  Note that $||I(\dot c)||_{\mathbb H}=||\dot c||_{\mathbb H}$.  Without loss of generality, we may assume that $c$ has constant speed $v$.  Then by the horizontal condition $\tfrac{1}{2}x\dot y - \tfrac{1}{2}y \dot x=\dot z$ and the Cauchy-Schwarz inequality, 
\begin{align*}
|z(S)-z(0)| 
&= \tfrac{1}{2}|\int_0^S (x\dot y-y\dot x)dt|\\
&= \tfrac{1}{2}|\int_0^S \langle c, I(\dot c) \rangle_E dt|\\
&\leq \tfrac{1}{2}\sqrt{\int_0^S ||c(t)||_E^2 dt} \sqrt{\int_0^S ||I(\dot c(t))||_E^2 dt} \\
&= \tfrac{1}{2}\sqrt{\int_0^S ||c(t)||_E^2 dt} \sqrt{\int_0^S ||\dot c(t)||_E^2 dt} \\
&\leq\tfrac{1}{2}\sqrt{R^2S}\sqrt{v^2S}\\
&=\tfrac{1}{2}Rl(\gamma).
\end{align*}

\end{proof}

\begin{prop} \label{bounded}
The set $\{\gamma_n\}$ is uniformly bounded.
\end{prop}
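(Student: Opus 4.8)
The plan is to establish uniform boundedness of $\{\gamma_n\}$ in the norm $\|\cdot\|_{\mathcal F_k}$ of \eqref{Fnorm}, since this is exactly the hypothesis needed to run Arzel\`a--Ascoli and Banach--Alaoglu in the next step. By the definition of the norm, $\|\gamma_n\|_{\mathcal F_k}^2 = \int_0^T \|\dot\gamma_n(t)\|_{\mathbb H}^2\, dt + T\|\gamma_n(0)\|_E^2$, so it suffices to bound these two pieces separately and uniformly in $n$. The velocity term is immediate from the action bound: since $U<0$ we have $L \geq K = \tfrac{1}{2}\|\dot\gamma\|_{\mathbb H}^2 \geq 0$, whence $\int_0^T \|\dot\gamma_n\|_{\mathbb H}^2\, dt = 2\int_0^T K\, dt \leq 2A(\gamma_n) \leq 2M$. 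Thus everything reduces to bounding the basepoint $\gamma_n(0) = (x_n(0), y_n(0), z_n(0))$, which I would split into its horizontal and vertical components.

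For the horizontal component, $\|\tilde\gamma_n(0)\|_E = \|\tilde\gamma_n(0)\|_{sR}$ is already controlled by Proposition \ref{projbdd}; indeed the proof of that proposition gives the sharper inequality $C\|\tilde\gamma_n(0)\|_{sR} \leq l(\gamma_n) \leq \sqrt{2M}$, so $x_n(0)^2 + y_n(0)^2$ is uniformly bounded. The only genuinely new estimate required is a bound on the vertical coordinate $z_n(0)$, and this is precisely what Lemma \ref{zbound} is designed to provide.

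For the vertical component, I would exploit the reflection symmetry (S2): evaluating at $t=0$ gives $z_n(T/2) = -z_n(0)$, so $|z_n(T/2) - z_n(0)| = 2|z_n(0)|$. Applying Lemma \ref{zbound} to $\gamma_n$ restricted to $[0, T/2]$, with $R = (\tfrac{1}{C}+1)\sqrt{2M}$ the uniform bound on $\|\tilde\gamma_n\|_E$ furnished by Proposition \ref{projbdd} (recall the two distances agree in the $xy$-plane) and with the length of the restriction bounded by $l(\gamma_n) \leq \sqrt{2M}$, yields $|z_n(T/2) - z_n(0)| \leq \tfrac{1}{2}R\sqrt{2M}$. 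Combining the two displays gives $|z_n(0)| \leq \tfrac{1}{4}R\sqrt{2M}$, a bound independent of $n$.

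Assembling the three estimates bounds $\|\gamma_n(0)\|_E^2$ and hence $\|\gamma_n\|_{\mathcal F_k}^2 \leq 2M + T\|\gamma_n(0)\|_E^2$ uniformly in $n$. I expect the vertical bound to be the only real obstacle: unlike the horizontal data, $z_n(0)$ is not directly constrained by the length of the curve, and pinning it down requires the interplay of the isoperimetric-type estimate in Lemma \ref{zbound}, the uniform bound on the projections from Proposition \ref{projbdd}, and the symmetry condition (S2) that identifies $z_n(T/2)$ with $-z_n(0)$. Once the vertical estimate is in hand, the remainder is a routine assembly of bounds already established.
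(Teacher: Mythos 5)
Your central estimate is exactly the paper's: combine (S2) at $t=0$ with Lemma \ref{zbound} on $[0,T/2]$, taking $R=(\tfrac{1}{C}+1)\sqrt{2M}$ from Proposition \ref{projbdd}, to get $2|z_n(0)|\le\tfrac{1}{2}R\,l(\gamma_n)\le\tfrac{1}{2}R\sqrt{2M}$. The only divergence is in what you conclude: you stop at boundedness of $\|\gamma_n\|_{\mathcal F_k}$, whereas this proposition, as it is used in the Arzel\`a--Ascoli step, asserts uniform boundedness of the curves themselves, i.e.\ $\sup_n\sup_t\|\gamma_n(t)\|_E<\infty$. That pointwise bound follows in one line from ingredients you already have, namely $\|\gamma_n(t)\|_E\le\|\gamma_n(0)\|_E+d_E(\gamma_n(0),\gamma_n(t))\le\|\gamma_n(0)\|_E+l(\gamma_n)$ with both terms uniformly bounded, and this is essentially how the paper finishes (it bounds $|z_n(t)|\le|z_n(0)|+l(\gamma_n)$ and handles the horizontal part by Proposition \ref{projbdd}). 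The $\mathcal F_k$-norm bound you prove is also needed later, but the paper defers it to the Banach--Alaoglu step, where it is assembled from the present proposition together with Lemma \ref{L2bdd}; so your argument is correct and contains all the right ideas, just packaged against a slightly different target statement.
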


\begin{proof}
By Proposition \ref{projbdd}, the curves $\gamma_n$ satisfy the hypothesis of Lemma \ref{zbound}: we can take $R=(\tfrac{1}{C}+1)\sqrt{2M}$.  Take $S=T/2$ and denote the $z$-components of the curves $\gamma_n$ by $z_n$.  Then the Lemma, with the fact that $z_n(0)=-z_n(T/2)$, implies
\[|2z_n(0)| \leq \tfrac{1}{2}Rl(\gamma_n|_{[0, T/2]}) \leq \tfrac{1}{2}Rl(\gamma_n). \]
Then we find
\begin{align*}
|z_n(t)| &\leq |z_n(0)|+l(\gamma_n) \\
&= (\tfrac{1}{4}R+1)l(\gamma_n) \\
&\leq  (\tfrac{1}{4}R+1)\sqrt{2M}.
\end{align*}
Thus, the family $z_n(t)$ is uniformly bounded.
Since the projections $\tilde \gamma_n$ and the $z$-coordinates $z_n$ are uniformly bounded, so are the curves $\gamma_n$.
\end{proof}

\begin{lemma}\label{holder}
If $\gamma \in \mathcal F_k$ then $\gamma$ is H\"{o}lder continuous with H\"{o}lder exponent $\frac{1}{2}$.
\end{lemma}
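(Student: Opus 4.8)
The plan is to establish Hölder-$1/2$ continuity componentwise, treating the planar coordinates $x,y$ and the vertical coordinate $z$ separately. This split is essential because the function space norm \eqref{Fnorm} controls only the horizontal velocity $\dot x,\dot y$ in $\mathcal L^2$ and provides no direct bound on $\dot z$; the vertical estimate must instead be recovered from the horizontal constraint $\dot z=\tfrac12(x\dot y-y\dot x)$.

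For the planar part, note that $\gamma\in\mathcal F_k\subset H^{1}(S^1,\mathbb H)$ forces $\dot x,\dot y\in\mathcal L^2([0,T])$. For $0\le s\le t\le T$, writing $x(t)-x(s)=\int_s^t\dot x\,d\tau$ and applying Cauchy-Schwarz in $\mathcal L^2$ gives
\[
|x(t)-x(s)|\le |t-s|^{1/2}\Big(\int_0^T\dot x^2\,d\tau\Big)^{1/2},
\]
and identically for $y$. This is the standard one-dimensional Morrey embedding $H^1\hookrightarrow C^{0,1/2}$. In particular $x$ and $y$ are continuous on the compact circle, hence bounded, so the planar projection $\tilde\gamma=(x,y)$ satisfies $R:=\sup_t\|\tilde\gamma(t)\|_E<\infty$.

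The crux is the $z$-coordinate, and here I would exploit Lemma \ref{zbound} localized to a subinterval rather than any a priori control on $\dot z$. Applying Lemma \ref{zbound} to the restriction $\gamma|_{[s,t]}$, whose projection stays within Euclidean radius $R$, yields
\[
|z(t)-z(s)|\le \tfrac12 R\, l\big(\gamma|_{[s,t]}\big).
\]
Bounding the length of the restriction by Cauchy-Schwarz, using $\|\dot\gamma\|_{\mathbb H}=\sqrt{\dot x^2+\dot y^2}$, gives
\[
l\big(\gamma|_{[s,t]}\big)=\int_s^t\|\dot\gamma\|_{\mathbb H}\,d\tau\le |t-s|^{1/2}\Big(\int_0^T\|\dot\gamma\|_{\mathbb H}^2\,d\tau\Big)^{1/2},
\]
so that $|z(t)-z(s)|\le \tfrac12 R\,|t-s|^{1/2}\big(\int_0^T\|\dot\gamma\|_{\mathbb H}^2\,d\tau\big)^{1/2}$, the desired Hölder estimate for $z$.

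Finally I would assemble the three estimates: since $\|\gamma(t)-\gamma(s)\|_E^2=|x(t)-x(s)|^2+|y(t)-y(s)|^2+|z(t)-z(s)|^2$, each summand is bounded by a constant times $|t-s|$, giving $\|\gamma(t)-\gamma(s)\|_E\le C|t-s|^{1/2}$ with $C$ depending only on $R$ and the $\mathcal L^2$ norm of $\dot\gamma$. I do not expect a real obstacle here; the only point requiring care is the bootstrap order, namely that one must first extract boundedness of the planar projection from the Hölder continuity of $x$ and $y$ before the horizontal constraint can be invoked to control $z$.
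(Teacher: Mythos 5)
Your proof is correct, but it takes a genuinely different route from the paper's. The paper's argument is a two-line computation entirely in the Carnot--Carath\'eodory metric: since $\gamma$ is horizontal, the sub-Riemannian distance between $\gamma(r)$ and $\gamma(s)$ is at most the length of the arc between them, so
\[
d(\gamma(r),\gamma(s)) \;\leq\; \int_r^s \|\dot\gamma(t)\|_{\mathbb H}\,dt \;\leq\; |r-s|^{1/2}\,\|\dot\gamma\|_{\mathcal L^2}
\]
by Cauchy--Schwarz, with no componentwise splitting and no appeal to Lemma \ref{zbound}. This yields H\"older continuity with respect to $d$ and with the clean coefficient $\|\dot\gamma\|_{\mathcal L^2}$, which is exactly the form quoted later in Proposition \ref{equicont}. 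Your version establishes H\"older continuity with respect to the Euclidean metric instead, and since $d_E \leq d$ the paper's conclusion is strictly stronger: the converse implication fails in the vertical direction, where the ball-box estimates only give $d \lesssim d_E^{1/2}$ locally. What your approach buys is transparency about where the $z$-control comes from (the horizontal constraint via Lemma \ref{zbound}, which does appear before this lemma in the paper, so there is no circularity); what it costs is a H\"older coefficient that also depends on $R=\sup_t\|\tilde\gamma(t)\|_E$, so that when the lemma is fed into the equicontinuity argument for the family $\{\gamma_n\}$ you would additionally need the uniform bound on the planar projections from Proposition \ref{projbdd}. Euclidean equicontinuity still suffices for Arzel\`a--Ascoli, so the overall proof would survive with your version, but the paper's one-step estimate is both shorter and better adapted to the sub-Riemannian setting.
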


\begin{proof}
This is a version of the Sobolev embedding theorem.  Using the Cauchy-Schwarz inequality with $f
=||\dot \gamma(t)||_{\mathbb H}$ and $g=1$, one has
\begin{align*}
d(\gamma(r),\gamma(s)) &\leq l( \gamma|_{[r,s]}) \\
&=\int_r^s ||\dot \gamma(t)||_{\mathbb H} dt \\
&\leq \sqrt{|r-s|\int^T_0||\dot \gamma(t)||_{\mathbb H}^2dt} \\
&=|r-s|^{1/2}\ ||\dot \gamma||_{\mathcal L^2}.
\end{align*}
This shows that $\gamma$ satisfies the H\"{o}lder condition with exponent $1/2$ and coefficient $||\dot \gamma||_{\mathcal L^2}.$  
\end{proof}

\begin{lemma}\label{L2bdd}
The norms $||\dot \gamma_n ||_{\mathcal L^2}$ are bounded. 
\end{lemma}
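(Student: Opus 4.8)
The plan is to read the bound directly off the action functional, exploiting that the potential contributes nonnegatively to the action. Recall that for a horizontal curve the sub-Riemannian speed satisfies $||\dot\gamma(t)||^2_{\mathbb H}=\dot x^2+\dot y^2$, so the kinetic part of the Lagrangian is exactly $\tfrac{1}{2}||\dot\gamma(t)||^2_{\mathbb H}$ and the action splits as
\[ A(\gamma_n)=\tfrac{1}{2}\int_0^T ||\dot\gamma_n(t)||^2_{\mathbb H}\,dt+\int_0^T \alpha\mu^{-1/2}\,dt. \]

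First I would observe that the potential integrand $\alpha\mu^{-1/2}$ is strictly positive wherever it is defined, since $\alpha>0$ and $\mu=(x^2+y^2)^2+\tfrac{1}{16}z^2>0$ away from the origin. Hence the second integral is nonnegative, and discarding it only decreases the right-hand side:
\[ A(\gamma_n)\geq \tfrac{1}{2}\int_0^T ||\dot\gamma_n(t)||^2_{\mathbb H}\,dt=\tfrac{1}{2}||\dot\gamma_n||^2_{\mathcal L^2}. \]
This is precisely the inequality already appearing in the proof of Proposition \ref{coercivity}, halted one step before the Cauchy-Schwarz estimate that passes to the length $l(\gamma)$; here I do not need that further step.

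Then I would invoke the uniform action bound $A(\gamma_n)\leq M$, established just after the minimizing sequence was introduced, to conclude $||\dot\gamma_n||^2_{\mathcal L^2}\leq 2A(\gamma_n)\leq 2M$, so that $||\dot\gamma_n||_{\mathcal L^2}\leq\sqrt{2M}$ uniformly in $n$. I expect no genuine obstacle in this lemma: the only points requiring care are the identification of the kinetic term of $L$ with $\tfrac{1}{2}||\dot\gamma||^2_{\mathbb H}$ via horizontality and the strict positivity of the potential. This estimate is exactly the ingredient needed to control the $\mathcal F_k$-norm \eqref{Fnorm}, since $||\gamma_n||^2_{\mathcal F_k}=||\dot\gamma_n||^2_{\mathcal L^2}+T\,||\gamma_n(0)||^2_E$ and the starting points $\gamma_n(0)$ are already bounded by Propositions \ref{projbdd} and \ref{bounded}; together these let the weak compactness arguments of Step 2 proceed.
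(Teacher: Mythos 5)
Your argument is correct and is essentially identical to the paper's proof: both drop the (positive) potential contribution to the Lagrangian to get $\tfrac{1}{2}||\dot\gamma_n||^2_{\mathcal L^2}\leq A(\gamma_n)$ and then invoke the uniform bound $A(\gamma_n)\leq M$ on the minimizing sequence. Nothing further is needed.
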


\begin{proof}
We have 
\begin{align*}
||\dot \gamma_n ||^2_{\mathcal L^2} &= \int_0^T ||\dot \gamma_n ||^2_{\mathbb H}dt \\
&= \int_0^T2K(\gamma_n)dt \\
&\leq 2\int_0^T (K(\gamma_n)+U(\gamma_n))dt\\
&= 2 A(\gamma_n)\\
& \leq 2M.
\end{align*}
\end{proof}

\begin{prop}\label{equicont}
The family $\{ \gamma_n\}$ is equicontinuous.
\end{prop}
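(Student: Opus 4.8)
The plan is to obtain equicontinuity almost immediately by combining the two preceding lemmas; the key point is that Lemma \ref{holder} produces a H\"older estimate whose coefficient is exactly $||\dot \gamma||_{\mathcal L^2}$, and Lemma \ref{L2bdd} bounds this coefficient \emph{uniformly} over the entire minimizing sequence. In other words, the substantive work has already been packaged into those two lemmas, and all that remains is to observe that the H\"older coefficient is independent of $n$.

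First I would apply Lemma \ref{holder} to each $\gamma_n$ to get, for all $r,s \in [0,T]$,
\[ d(\gamma_n(r),\gamma_n(s)) \leq |r-s|^{1/2}\,||\dot \gamma_n||_{\mathcal L^2}. \]
Then I would invoke Lemma \ref{L2bdd}, which gives $||\dot \gamma_n||_{\mathcal L^2} \leq \sqrt{2M}$ with the bound independent of $n$, to upgrade this to the uniform estimate
\[ d(\gamma_n(r),\gamma_n(s)) \leq \sqrt{2M}\,|r-s|^{1/2}. \]
Since the right-hand side no longer depends on $n$, the standard $\epsilon$--$\delta$ argument closes the proof: given $\epsilon>0$, set $\delta = \epsilon^2/(2M)$, so that $|r-s|<\delta$ forces $d(\gamma_n(r),\gamma_n(s))<\epsilon$ simultaneously for every $n$, which is precisely the statement that $\{\gamma_n\}$ is equicontinuous.

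The one minor point I would address is which metric the equicontinuity is measured in. The estimate above is stated in the sub-Riemannian distance $d$, but since $d_E \leq d$ the same $\delta$ works verbatim for the Euclidean distance, and by the remark following \eqref{Fnorm} the two distances induce the same topology on $\mathbb H$; hence the family is equicontinuous in exactly the sense needed to feed into Arzela--Ascoli in Step 2. I do not expect any genuine obstacle here: the only thing requiring verification is the uniformity of the H\"older coefficient across the sequence, and that uniformity is supplied directly by Lemma \ref{L2bdd}.
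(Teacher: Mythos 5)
Your proof is correct and follows exactly the paper's own argument: apply Lemma \ref{holder} to get the H\"older estimate with coefficient $||\dot\gamma_n||_{\mathcal L^2}$, use Lemma \ref{L2bdd} to bound that coefficient uniformly by $\sqrt{2M}$, and take $\delta=\epsilon^2/(2M)$. The added remark about the equivalence of the sub-Riemannian and Euclidean topologies is a harmless clarification that the paper handles in the remark following \eqref{Fnorm}.
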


\begin{proof}
By Lemma \ref{holder}, we know
\[d(\gamma_n(r),\gamma_n(s)) \leq \sqrt{|r-s|}\ ||\dot \gamma_n||_{\mathcal L^2}.
\]
Now choose some $\epsilon >0$ and let $\delta=(\epsilon/\sqrt{2M})^2$.  Then if $|r-s|<\delta$, by  Lemma \ref{L2bdd}, for all $n$ we have
\begin{align*}
d(\gamma_n(r),\gamma_n(s)) 
&<\frac{\epsilon}{\sqrt{2M}}\ ||\dot \gamma_n||_{\mathcal L^2} 
\leq \epsilon.
\end{align*}

\end{proof}

\begin{prop} 
There is a subsequence $\{\gamma_{n_j}\}$ converging uniformly to some  $\gamma_*$. 
\end{prop}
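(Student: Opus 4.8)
The plan is to apply the Arzel\`a--Ascoli theorem directly, since the two hypotheses it requires have already been assembled in the preceding propositions. The curves $\gamma_n$ are continuous maps from the compact interval $S^1 = [0,T]$ (with endpoints identified) into $\mathbb H \cong \mathbb R^3$; their continuity follows from the H\"older estimate of Lemma \ref{holder}. Proposition \ref{bounded} shows the family $\{\gamma_n\}$ is uniformly bounded, so all the images lie inside a common closed Euclidean ball, which is a compact subset of $\mathbb R^3$. Proposition \ref{equicont} shows the family is equicontinuous. These are precisely the ingredients Arzel\`a--Ascoli consumes.

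With these in hand, the theorem produces a subsequence $\{\gamma_{n_j}\}$ converging uniformly on $S^1$ to a continuous limit curve $\gamma_*$; that is, $\sup_{t \in [0,T]} d(\gamma_{n_j}(t), \gamma_*(t)) \to 0$ as $j \to \infty$. It is worth recording that the Euclidean and Carnot--Carath\'eodory topologies on $\mathbb H$ coincide (as noted in the Remark above), so it is immaterial which metric is used to phrase the uniform convergence. To lighten notation I would relabel the convergent subsequence as $\{\gamma_n\}$ in what follows.

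The only point requiring care---and it is not truly an obstacle, since the analytic work was carried out in Propositions \ref{bounded} and \ref{equicont}---is to keep clear exactly what has and has not been established. This proposition delivers only $C^0$-convergence of a subsequence to \emph{some} continuous $\gamma_*$. It does not yet assert that $\gamma_* \in \mathcal F_k$, that $\gamma_*$ is horizontal, or that $\gamma_*$ minimizes the action. Verifying membership in $\mathcal F_k$ and the relevant weak convergence will require the Banach--Alaoglu argument flagged in Step 2, while the symmetry conditions (S1) and (S2) should be checked to pass to the limit (they do, since they are closed conditions preserved under uniform convergence). Keeping these deferred claims distinct from the present $C^0$ statement is the main thing to watch.
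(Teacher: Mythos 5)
Your proof is correct and follows exactly the same route as the paper: cite Proposition \ref{bounded} for uniform boundedness, Proposition \ref{equicont} for equicontinuity, and apply the Arzel\`a--Ascoli theorem to extract a uniformly convergent subsequence with continuous limit. The additional remarks about which claims are deferred to the Banach--Alaoglu step are accurate but not part of the paper's (one-sentence) argument.
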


\begin{proof}\label{unif}
By Propositions \ref{bounded} and \ref{equicont}, the sequence $\{\gamma_n\}$ is uniformly bounded and equicontinuous.
The Arzela-Ascoli theorem guarantees the existence of such a subsequence, which implies that $\gamma_*$ is continuous.  
\end{proof}

\begin{prop}\label{weak}
There is a subsequence $\{\gamma_{n_{j_i}}\}$ which converges weakly in $\mathcal F_k$ to  $\gamma_* \in \mathcal F_k$. 
\end{prop}

\begin{proof}
From Proposition \ref{bounded} we know that the norms  $||\gamma_n(0)||_{E}$ are bounded; also, by Lemma \ref{L2bdd}, the norms $||\dot \gamma_n||_{\mathcal L^2}$ are bounded. 
Thus, the norms
\[||\gamma_n||^2_{\mathcal F_k}=||\dot \gamma_n||^2_{\mathcal L^2}+T||\gamma_n(0)||^2_{E}
\] are bounded as well.
Then the Banach-Alaoglu theorem guarantees the existence of another subsequence which converges weakly to $\gamma_*$ so that this limiting curve is absolutely continuous.  
It is clear that this curve must satisfy the horizontal and symmetry conditions.
\end{proof}

\begin{remark}In the following we will re-index so that the sequence $\{\gamma_n\}$ converges to $\gamma_*$ both weakly and uniformly.
\end{remark}

\subsection{Minimization} We now prove that $\gamma_*$ minimizes the action functional.
\begin{lemma} One has
\[\int_0^T\langle \dot \gamma_n, \dot \gamma_* \rangle dt\to \int_0^T ||\gamma_*||^2_{\mathbb H}dt. \] 
\end{lemma}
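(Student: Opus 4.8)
The plan is to recognize the map $\gamma \mapsto \int_0^T \langle \dot\gamma, \dot\gamma_* \rangle\, dt$ as a bounded linear functional on $\mathcal F_k$ and then to invoke the weak convergence $\gamma_n \rightharpoonup \gamma_*$ established in Proposition \ref{weak}. First I would unwind the Heisenberg inner product in the orthonormal frame. For a horizontal curve one has $\dot\gamma = \dot x\, X + \dot y\, Y$, and since the fibre inner product is simply the Euclidean dot product on the first two coordinates (so it discards the $\partial_z$-component and pairs vectors at different base points without difficulty), it follows that $\langle \dot\gamma_n, \dot\gamma_*\rangle = \dot x_n \dot x_* + \dot y_n \dot y_*$. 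Under the identification $\mathcal H \cong \mathcal L^2([0,T],\mathbb R^2)\times\mathbb H$, the pair $(\dot x,\dot y)$ is exactly the $\mathcal L^2$-coordinate of $\gamma$, so the quantity to be analyzed is nothing but the $\mathcal L^2$-pairing of the velocity of $\gamma_n$ against the fixed element $\dot\gamma_*\in\mathcal L^2$.

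Next I would set $\Phi(\gamma) = \int_0^T \langle \dot\gamma, \dot\gamma_*\rangle\, dt$ and verify that $\Phi$ lies in the dual $\mathcal F_k^*$. Linearity is immediate from the previous step: $\langle \dot\gamma,\dot\gamma_*\rangle$ is linear in the $\mathcal L^2$-coordinates $(\dot x,\dot y)$, and these depend linearly on $\gamma$ in the vector-space structure of $\mathcal F_k$. Boundedness follows from the Cauchy–Schwarz inequality in $\mathcal L^2$ together with $\|\dot\gamma\|_{\mathcal L^2}\le\|\gamma\|_{\mathcal F_k}$, namely
\[
|\Phi(\gamma)| \le \|\dot\gamma\|_{\mathcal L^2}\,\|\dot\gamma_*\|_{\mathcal L^2} \le \|\dot\gamma_*\|_{\mathcal L^2}\,\|\gamma\|_{\mathcal F_k},
\]
where $\|\dot\gamma_*\|_{\mathcal L^2}$ is finite because $\gamma_*\in\mathcal F_k$. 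With $\Phi$ a bounded linear functional, the conclusion is then automatic: by the very definition of weak convergence, $\gamma_n\rightharpoonup\gamma_*$ forces $\Phi(\gamma_n)\to\Phi(\gamma_*)$, and $\Phi(\gamma_*)=\int_0^T\langle\dot\gamma_*,\dot\gamma_*\rangle\,dt=\int_0^T\|\dot\gamma_*\|^2_{\mathbb H}\,dt$, which is the asserted limit.

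The main obstacle, and indeed the only place where the sub-Riemannian geometry genuinely intervenes, is the linearity of $\Phi$. The full velocity $\dot\gamma=(\dot x,\dot y,\dot z)$ is \emph{not} a linear function of the coordinates on $\mathcal F_k$, since horizontality forces $\dot z=\tfrac12(x\dot y-y\dot x)$, which is quadratic; any naive attempt to treat $\dot\gamma$ itself as the weakly convergent object would fail here. What saves the argument is precisely that the Heisenberg inner product annihilates the $\partial_z$-component, so only the honestly linear pair $(\dot x,\dot y)$ enters $\Phi$. If one prefers to stay entirely within the Hilbert structure, an equivalent route is to apply weak convergence directly to the full inner product $\langle\gamma_n,\gamma_*\rangle_{\mathcal F_k}=\int_0^T\langle\dot\gamma_n,\dot\gamma_*\rangle\,dt+T\langle\gamma_n(0),\gamma_*(0)\rangle_E$ and then subtract the boundary term, which converges to $T\|\gamma_*(0)\|_E^2$ because $\gamma_n(0)\to\gamma_*(0)$ by the uniform convergence of Proposition \ref{unif}.
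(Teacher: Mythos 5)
Your proposal is correct and follows essentially the same route as the paper: both treat $\gamma \mapsto \int_0^T \langle \dot\gamma, \dot\gamma_*\rangle\,dt$ as a bounded linear functional on $\mathcal F_k$ (bounded via Cauchy--Schwarz in $\mathcal L^2$) and apply the weak convergence of Proposition \ref{weak}, and you even reproduce the paper's alternative argument of splitting off the boundary term $\langle \gamma_n(0),\gamma_*(0)\rangle_E$ and controlling it by uniform convergence. Your explicit justification of linearity via the $(\dot x,\dot y)$ coordinates --- noting that the quadratic constraint on $\dot z$ is harmless because the Heisenberg inner product ignores the $\partial_z$-component --- is a detail the paper leaves implicit, but it does not change the approach.
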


\begin{proof}
From Proposition \ref{weak}, we know that $\{\gamma_n\}$ converges weakly to $\gamma_*$.
By definition, we have that 
$\phi(\gamma_n) \to \phi(\gamma_*) $
for any $\phi \in \mathcal F_k^*$.  In particular,  the functional
\[\phi_c(\gamma) = \int_0^T \langle\dot \gamma(t),\dot c(t) \rangle dt \]
is indeed an element of $\mathcal F_k^*$ for any $c \in \mathcal F_k$.  To see that $\phi_c$ is a bounded operator, note that the Cauchy-Schwarz inequality in $\mathcal L^2$ gives
\[\phi_c(\gamma)=\langle \dot c, \dot\gamma \rangle_{\mathcal L^2} \leq ||\dot c ||_{\mathcal L^2} ||\dot \gamma ||_{\mathcal L^2}= C||\dot \gamma ||_{\mathcal L^2}.
\]
We may therefore choose $c=\gamma_*$, and the Lemma follows. 

Alternatively, one can see that $\phi_{\gamma_*}$ is bounded on the sequence $\{\gamma_n\}$ by writing
\begin{equation}\label{bddoperator}
\phi_{\gamma_*}(\gamma_n)=\langle \dot \gamma_n, \dot \gamma_* \rangle_{\mathcal F_k}-\langle  \gamma_n(0),  \gamma_*(0) \rangle_E,
\end{equation}
where $\langle \cdot, \cdot \rangle_E$ denotes the Euclidean inner product on $\mathbb R^3$. Then, considering the right hand side of \eqref{bddoperator}, note that one can control the first term by weak convergence, and the second by uniform convergence.
\end{proof}

\begin{prop}\label{inf}
Our limiting curve $\gamma_*$ realizes the infimum of the action:
$$A(\gamma_*)=\inf_{\gamma \in \mathcal F_k}A(\gamma).$$
\end{prop}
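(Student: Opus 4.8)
The plan is to establish that the action is weakly lower semicontinuous along the sequence $\{\gamma_n\}$, i.e. that $A(\gamma_*) \le \liminf_n A(\gamma_n)$. Since $\liminf_n A(\gamma_n) = \lim_n A(\gamma_n) = \inf_{\mathcal F_k} A$ by the choice of minimizing sequence, and since $\gamma_* \in \mathcal F_k$ by Proposition \ref{weak}, the reverse inequality $A(\gamma_*) \ge \inf_{\mathcal F_k} A$ holds automatically, and equality follows at once. To prove the semicontinuity I would split the Lagrangian into its kinetic and potential pieces, $L = \tfrac{1}{2}\|\dot \gamma\|_{\mathbb H}^2 + \alpha \mu^{-1/2}$, and treat the two resulting integrals separately, each being lower semicontinuous for a different reason.

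For the kinetic term I would use the weak lower semicontinuity of the $\mathcal L^2$ norm. Expanding the nonnegative quantity $\|\dot \gamma_n - \dot \gamma_*\|_{\mathcal L^2}^2 \ge 0$ yields
\[ \int_0^T \|\dot \gamma_n\|_{\mathbb H}^2\, dt \ge 2\int_0^T \langle \dot \gamma_n, \dot \gamma_* \rangle\, dt - \int_0^T \|\dot \gamma_*\|_{\mathbb H}^2\, dt. \]
Taking the limit inferior and invoking the preceding Lemma, which gives $\int_0^T \langle \dot \gamma_n, \dot \gamma_* \rangle\, dt \to \int_0^T \|\dot \gamma_*\|_{\mathbb H}^2\, dt$, the right-hand side converges to $\int_0^T \|\dot \gamma_*\|_{\mathbb H}^2\, dt$, so that
\[ \int_0^T \|\dot \gamma_*\|_{\mathbb H}^2\, dt \le \liminf_n \int_0^T \|\dot \gamma_n\|_{\mathbb H}^2\, dt. \]

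For the potential term I would appeal to Fatou's Lemma. The integrand $\alpha \mu^{-1/2}$ is everywhere nonnegative and, as a function on $\mathbb H$, is lower semicontinuous: it is continuous away from the origin and tends to $+\infty$ there. Since $\gamma_n \to \gamma_*$ uniformly, hence pointwise, lower semicontinuity gives $\liminf_n \alpha \mu(\gamma_n(t))^{-1/2} \ge \alpha \mu(\gamma_*(t))^{-1/2}$ for each $t$, and Fatou's Lemma then delivers $\int_0^T \alpha \mu(\gamma_*)^{-1/2}\, dt \le \liminf_n \int_0^T \alpha \mu(\gamma_n)^{-1/2}\, dt$. Adding the two inequalities and using the superadditivity $\liminf a_n + \liminf b_n \le \liminf(a_n + b_n)$ gives $A(\gamma_*) \le \liminf_n A(\gamma_n) = \inf_{\mathcal F_k} A$, which completes the argument.

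The subtlety I would emphasize is that the potential estimate must stay valid even if $\gamma_*$ passes through the origin, since we do not yet know $\gamma_*$ is collision-free (that is deferred to Step 4). The nonnegativity of $\alpha \mu^{-1/2}$ is precisely what makes Fatou's Lemma applicable with no integrability hypothesis on the limiting integrand, so the bound $A(\gamma_*) \le \inf_{\mathcal F_k} A$ holds unconditionally; were $\gamma_*$ a collision path its action could only be $+\infty$, a possibility excluded a posteriori by the finiteness of the infimum. Everything else is routine once the preceding Lemma is in hand, so I expect no further obstacle in this step.
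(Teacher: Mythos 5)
Your proposal is correct and follows essentially the same route as the paper: weak lower semicontinuity of the kinetic term via expanding $\|\dot\gamma_n-\dot\gamma_*\|^2_{\mathcal L^2}\ge 0$ together with the preceding Lemma, Fatou's Lemma for the potential term using uniform convergence, and then adding the two inequalities. Your explicit attention to the nonnegativity of $\alpha\mu^{-1/2}$ (so that Fatou applies even if $\gamma_*$ were a collision path) is a slightly more careful rendering of what the paper does implicitly, but it is not a different argument.
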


\begin{proof}
Since
\[0 \leq \int_0^T ||\dot\gamma_n-\dot \gamma_*||^2_{\mathbb H}dt = \int_0^T (||\dot\gamma_n||^2_{\mathbb H}+||\dot\gamma_*||^2_{\mathbb H}-2\langle \dot \gamma_n, \dot \gamma_* \rangle) dt,
\]
we have 
\[\int_0^T 2\langle \dot \gamma_n, \dot \gamma_* \rangle dt
\leq
\int_0^T (||\dot\gamma_n||^2_{\mathbb H}+||\dot\gamma_*||^2_{\mathbb H}) dt.
\]
Taking the limit inferior of both sides and using the previous Lemma yields
\[\int_0^T ||\dot\gamma_*||^2_{\mathbb H} dt
\leq
\liminf \int_0^T ||\dot\gamma_n||^2_{\mathbb H} dt.
\]
We may rewrite this last inequality as
\begin{equation}\label{kinetic}
\int_0^T K(\gamma_*(t)) dt
\leq
\liminf \int_0^T K(\gamma_n(t)) dt.
\end{equation}
Now, we know $\gamma_n(t) \to \gamma_*(t)$ uniformly.  Since our potential $U$ is continuous (except at the origin), we have that $U(\gamma_n(t)) \to U(\gamma_*(t))$ uniformly almost everywhere.  Then Fatou's Lemma implies
\begin{equation}\label{potential}
\int_0^T U(\gamma_*(t)) dt
\leq \liminf \int_0^T U(\gamma_n(t)) dt.
\end{equation}
Adding \eqref{kinetic} and \eqref{potential} gives
\[
A(\gamma_*)=\int_0^T L(\gamma_*(t)) dt
\leq
\liminf \int_0^T L(\gamma_n(t)) dt=\liminf A(\gamma_n).
\]
But $\{\gamma_n\}$ is a minimizing sequence, and $\gamma_* \in \mathcal F_k$ so
\[
A(\gamma_*)=\liminf A(\gamma_n)=\inf_{\gamma \in \mathcal F_k}A(\gamma).
\]

\end{proof}

\subsection{Avoiding Collision}
We will show that a curve in $\mathcal F_k$ suffering a collision necessarily has infinite action.  Without loss of generality, we may assume the collision occurs at time $t=0$.


Let $H_g \colon T^*\mathbb H \to \mathbb R$ denote the Hamiltonian generating geodesic flow on the Heisenberg group (which is also our kinetic energy, $K$).  In other words, let
\[H_g(q,p)=\frac{1}{2}(P_X^2+P_Y^2)=\frac{1}{2}(p,p),\]
where $(\cdot, \cdot)$ is the cometric induced by the inner product $\langle \cdot, \cdot \rangle$.
Then let  
\[S(q,t)=\inf \int_0^tL_g(\gamma(t), \dot \gamma(t))dt = \inf \int_0^t\tfrac{1}{2}||\dot \gamma(s)||^2_{\mathbb H}ds,
\]
where the infimum is taken over all paths $\gamma$ connecting 0 to $q$ in time $t$.  Here, the Lagrangian $L_g$ is related to the Hamiltonian $H_g$ by the Legendre transform.  Note that the function $S$ is known as \emph{Hamilton's generating function} or the \emph{action} in mechanics, and the \emph{value function} in optimal control.
Then the Hamilton-Jacobi equation (see \cite{Foundations} or \cite{Arnold1}) says
\begin{equation}\label{Ham-Jac}
H_g(q, dS)=-\frac{\partial S}{\partial t}
\end{equation}
at all points $(q, t)$ where $S$ is differentiable.
As the next Lemma shows, $S$ is differentiable at points $(q,t)$ where $t\neq 0$ and the function $f(q)=d(q,0)$ is smooth at $q$.  The latter holds if, for the minimizing geodesic $\gamma$ connecting $q$ to the origin, $\gamma$ contains no conjugate or cut points.  In the Heisenberg group, there are no cut points, and the locus of points conjugate to the origin consists of the $z$-axis (see \cite{Brockett}).
Thus, the Hamilton-Jacobi equation \eqref{Ham-Jac} holds almost everywhere: for all points $(q,t)$ such that $t \ne 0$ and $q$ does not lie on the $z$-axis. 


\begin{lemma}\label{computeS}
We can express the generating function $S$ as 
\[
S(q,t)=\frac{||q||^2_{sR}}{2t}.
\]
\end{lemma}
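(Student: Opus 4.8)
The plan is to prove the two inequalities $S(q,t) \geq \|q\|_{sR}^2/(2t)$ and $S(q,t) \leq \|q\|_{sR}^2/(2t)$ separately; together they give the formula. This is the soft identity relating the energy (kinetic-action) functional to the squared distance, and the whole argument rests on Cauchy-Schwarz together with the existence of a length-minimizing geodesic.

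First I would establish the lower bound. For any horizontal path $\gamma$ joining $0$ to $q$ in time $t$, the Cauchy-Schwarz inequality in $\mathcal L^2([0,t])$ (the same tool used in Proposition \ref{coercivity} and Lemma \ref{holder}) gives
\[
l(\gamma) = \int_0^t \|\dot \gamma(s)\|_{\mathbb H}\, ds \leq \sqrt{t}\left( \int_0^t \|\dot \gamma(s)\|_{\mathbb H}^2\, ds \right)^{1/2},
\]
so that $\int_0^t \tfrac{1}{2}\|\dot \gamma(s)\|_{\mathbb H}^2\, ds \geq l(\gamma)^2/(2t)$. Since $\gamma$ is horizontal and connects $0$ to $q$, its length is at least the sub-Riemannian distance, $l(\gamma) \geq d(q,0) = \|q\|_{sR}$. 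Taking the infimum over all such $\gamma$ then yields $S(q,t) \geq \|q\|_{sR}^2/(2t)$.

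Next I would establish the matching upper bound by exhibiting a path that achieves it. Let $\sigma$ be a minimizing geodesic from $0$ to $q$; such a geodesic exists because the Heisenberg group is a complete sub-Riemannian manifold in which any two points are joined by a length-minimizer. I reparametrize $\sigma$ by constant speed on $[0,t]$, so its speed is $\|q\|_{sR}/t$ throughout. Then equality holds in the Cauchy-Schwarz step above, and
\[
\int_0^t \tfrac{1}{2}\|\dot \sigma(s)\|_{\mathbb H}^2\, ds = \tfrac{1}{2}\left( \frac{\|q\|_{sR}}{t} \right)^{\!2} t = \frac{\|q\|_{sR}^2}{2t}.
\]
This single path shows $S(q,t) \leq \|q\|_{sR}^2/(2t)$, and combining with the lower bound gives the claimed identity.

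The argument is essentially formal, and the only point requiring care is the existence and constant-speed reparametrization of the minimizing geodesic realizing the upper bound; this is where I expect the (mild) obstacle to lie. If one prefers to avoid invoking a genuine minimizer, the upper bound can instead be obtained by choosing a sequence of constant-speed horizontal paths from $0$ to $q$ whose lengths decrease to $\|q\|_{sR}$: each has action approaching $\|q\|_{sR}^2/(2t)$, so the infimum is bounded above by $\|q\|_{sR}^2/(2t)$ in any case. Thus the substance of the proof is the two-sided estimate, and there is no genuine analytic difficulty.
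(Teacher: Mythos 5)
Your proof is correct and follows essentially the same route as the paper: Cauchy--Schwarz in $\mathcal L^2([0,t])$ relating length to energy, with equality exactly for constant-speed paths, followed by taking the infimum. Your version merely makes explicit the two-sided estimate (and the harmless use of a minimizing geodesic or an approximating sequence of constant-speed paths) that the paper compresses into ``taking the infimum of both sides.''
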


\begin{proof}
Suppose $\gamma \colon [0,t] \to \mathbb H$, with $\gamma(0)=0$ and $\gamma(t)=q$.  Then the Cauchy-Schwarz inequality gives
\[\int_0^t||\dot \gamma(s)||_{\mathbb H}ds \leq \sqrt{\int_0^t||\dot \gamma(s)||^2_{\mathbb H}ds}\sqrt{\int_0^tds},
\]
with equality if and only if the speed $||\dot \gamma(s)||_{\mathbb H}$ is constant.  We recognize the left-hand side as the length $l(\gamma)$.  Now suppose $\gamma$ has constant speed, so we may rewrite this (in)equality as
\[l(\gamma)=\sqrt{t}~ \sqrt{\int_0^t||\dot \gamma(s)||^2_{\mathbb H}ds}, 
\]
or, 
\[\int_0^t\tfrac{1}{2}||\dot \gamma(s)||^2_{\mathbb H}ds=\frac{l(\gamma)^2}{2t}.
\]
Finally, taking the infimum (over all such $\gamma$) of both sides yields the desired result.

\end{proof}

\begin{lemma}\label{speed}
Suppose $\gamma \colon [0,T] \to \mathbb H$ is horizontal and $\gamma(0)=0$.  Then 
\[\frac{d}{dt}||\gamma(t)||_{sR} \leq ||\dot \gamma(t)||_{\mathbb H} \]
almost everywhere.
\end{lemma}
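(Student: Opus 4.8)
\section*{Proof proposal}

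The plan is to avoid differentiating the sub-Riemannian distance function $f(q)=||q||_{sR}=d(q,0)$ directly — it fails to be smooth (indeed, as noted above it is not differentiable along the $z$-axis, the conjugate locus) — and instead to extract the bound from the triangle inequality together with the fact that arclength dominates distance. The key observation is that $t\mapsto ||\gamma(t)||_{sR}$ is the composition of the $1$-Lipschitz function $f$ with an absolutely continuous horizontal curve, hence is itself absolutely continuous and therefore differentiable almost everywhere; it then suffices to estimate its forward difference quotient.

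First I would fix $h>0$ and apply the triangle inequality for $d$, obtaining
\[
||\gamma(t+h)||_{sR}-||\gamma(t)||_{sR}=d(\gamma(t+h),0)-d(\gamma(t),0)\le d(\gamma(t+h),\gamma(t)).
\]
Since the restriction $\gamma|_{[t,t+h]}$ is an admissible horizontal curve joining $\gamma(t)$ to $\gamma(t+h)$, its length bounds this distance, and because $\gamma$ is horizontal its length is computed by integrating the Heisenberg speed (as in Lemma \ref{holder}):
\[
d(\gamma(t+h),\gamma(t))\le l\bigl(\gamma|_{[t,t+h]}\bigr)=\int_t^{t+h}||\dot\gamma(s)||_{\mathbb H}\,ds.
\]
Dividing by $h>0$ yields the forward difference quotient bound
\[
\frac{||\gamma(t+h)||_{sR}-||\gamma(t)||_{sR}}{h}\le\frac{1}{h}\int_t^{t+h}||\dot\gamma(s)||_{\mathbb H}\,ds .
\]

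Finally I would let $h\to 0^+$. At every $t$ where the left-hand side converges — which is almost every $t$, by absolute continuity of $t\mapsto ||\gamma(t)||_{sR}$ — the limit is $\frac{d}{dt}||\gamma(t)||_{sR}$. The right-hand side converges to $||\dot\gamma(t)||_{\mathbb H}$ at every Lebesgue point of the function $s\mapsto ||\dot\gamma(s)||_{\mathbb H}$, which lies in $\mathcal L^2([0,T])\subset\mathcal L^1([0,T])$, so again this holds for almost every $t$. Intersecting the two full-measure sets gives the claimed inequality. The step I expect to require the most care — and the only genuine subtlety — is upgrading the forward, one-sided bound to a bound on the actual two-sided derivative: this is precisely where almost-everywhere differentiability of the absolutely continuous map $t\mapsto ||\gamma(t)||_{sR}$ is used, so that at a.e.\ $t$ the one-sided limit coincides with the honest derivative. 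Notably, no differentiability of the distance function $f$ itself is ever invoked.
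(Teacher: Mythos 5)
Your proof is correct, but it takes a genuinely different route from the paper. The paper's argument goes through the Hamilton--Jacobi machinery: it uses Lemma \ref{computeS} to identify the generating function $S(q,t)=\|q\|_{sR}^2/(2t)$, derives from \eqref{Ham-Jac} the eikonal equation $\|\nabla_{sR}R\|_{\mathbb H}=1$ (valid off the $z$-axis, where the distance function is smooth), and then concludes by the chain rule and the Cauchy--Schwarz inequality $\bigl|\langle \nabla_{sR}R,\dot\gamma\rangle\bigr|\le \|\nabla_{sR}R\|_{\mathbb H}\|\dot\gamma\|_{\mathbb H}$. You instead use only the triangle inequality, the fact that sub-Riemannian distance is the infimum of horizontal lengths, and the Lebesgue differentiation theorem; the $1$-Lipschitz estimate $|f(\gamma(t+h))-f(\gamma(t))|\le \int_t^{t+h}\|\dot\gamma\|_{\mathbb H}\,ds$ gives both the absolute continuity you need for a.e.\ differentiability and the difference-quotient bound in one stroke. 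Your argument is more elementary and more robust: it never touches the differentiability of the distance function, so the conjugate-locus discussion preceding Lemma \ref{computeS} becomes irrelevant to this step, and the argument would work verbatim in any sub-Riemannian (indeed any length) space. What the paper's approach buys in exchange is the explicit formula for $S$ and the eikonal identity, which fit the Hamilton--Jacobi theme announced in Step 4 of the proof outline; but for the inequality actually needed in the collision estimate, your route suffices. One cosmetic remark: like the paper's proof, yours never uses the hypothesis $\gamma(0)=0$, which is only relevant to the subsequent collision proposition.
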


\begin{proof}
Let $R(q)=||q||_{sR}$ for sake of notation.  Then, by Lemma \ref{computeS}, we have
\[S(q,t)=\frac{R^2(q)}{2t},\]
so that 
\[dS=\frac{R(q)dR}{t}\]
and 
\[\frac{\partial S}{\partial t}=-\frac{R^2(q)}{2t^2}.\]
Choosing $(q,t)$ such that $R(q)=t$ implies $dS=dR$ and 
\[\frac{\partial S}{\partial t}=-\frac{1}{2}.\]
Thus, the Hamilton-Jacobi equation reads\footnote{An optimal control theoretic version of this result can be found in Chapter 1, Section 9, of \cite{Pont}.}
\[\frac{1}{2}(dR,dR)=\frac{1}{2},\]
which is equivalent to
\[||\nabla_{sR} R||_{\mathbb H}=1,\]
and this holds almost everywhere.
Finally, we can employ the chain rule and the Cauchy-Schwarz inequality in the Heisenberg group to obtain:
\begin{align*}
\frac{d}{dt}||\gamma(t)||_{sR} &= \frac{d}{dt} R(\gamma(t)) \\
&\leq |\frac{d}{dt} R(\gamma(t))| \\
&= |\langle \nabla_{sR} R(\gamma(t)), \dot \gamma(t)\rangle |\\
&\leq ||\nabla_{sR} R(\gamma(t))||_{\mathbb H}||\dot\gamma(t)||_{\mathbb H} \\
&=||\dot\gamma(t)||_{\mathbb H}
\end{align*}
almost everywhere.
\end{proof}

\begin{prop}
Suppose $\gamma \in \mathcal F_k$ and $\gamma(0)=0$.  Then $A(\gamma)=\infty$.
\end{prop}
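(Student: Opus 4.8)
The plan is to run a strong-force (Gordon-type) argument. The key observation is that the Folland potential blows up like $\rho^{-2}$ as we approach the origin, and the exponent $-2$ is precisely the borderline ``strong force'' rate at which collision orbits are forced to have infinite action. Since the kinetic term in the Lagrangian is nonnegative, it suffices to show that the potential part of the action alone,
\[
\int_0^T \big(-U(\gamma(t))\big)\,dt = \int_0^T \alpha\,\rho(\gamma(t))^{-2}\,dt,
\]
diverges when $\gamma(0)=0$.

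First I would compare the two competing notions of size near the origin: the Folland norm $\rho=\mu^{1/4}$ that appears in $U$, and the sub-Riemannian norm $\|\cdot\|_{sR}$ in which we actually control the curve. Both are continuous, vanish only at the origin, and are homogeneous of degree $1$ under the dilations $\delta_\lambda$ of Section \ref{Heisenberg}. The unit sphere $\{\|q\|_{sR}=1\}$ is compact (it is closed and bounded, and by the remark above the Carnot--Caratheodory and Euclidean topologies agree), so $\rho$ attains a finite maximum $M$ on it; homogeneity then upgrades this bound to $\rho(q)\le M\,\|q\|_{sR}$ for every $q$. Consequently $-U(q)=\alpha\,\rho(q)^{-2}\ge \beta\,\|q\|_{sR}^{-2}$, where $\beta=\alpha/M^2>0$.

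Next I would control the rate at which $\gamma$ reaches the origin. Writing $R(t)=\|\gamma(t)\|_{sR}$, the collision hypothesis is just $R(0)=0$. Since $\gamma\in\mathcal F_k$ we have $N:=\|\dot\gamma\|_{\mathcal L^2}<\infty$ (and if $N=0$ the curve is constantly $0$, so $-U\equiv+\infty$ and the action is trivially infinite). Integrating the estimate of Lemma \ref{speed} and applying Cauchy--Schwarz exactly as in Lemma \ref{holder} gives $R(t)=R(t)-R(0)\le \int_0^t \|\dot\gamma\|_{\mathbb H}\,ds\le \sqrt t\,N$, hence $R(t)^2\le N^2 t$. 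Combining with the previous step, $-U(\gamma(t))\ge \beta R(t)^{-2}\ge \beta/(N^2 t)$ for almost every $t$, and therefore
\[
A(\gamma)\ge \int_0^T \big(-U(\gamma(t))\big)\,dt \ge \frac{\beta}{N^2}\int_0^T \frac{dt}{t}=+\infty.
\]

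The one genuinely delicate point is the comparison in the first step: the conclusion hinges on the potential decaying no slower than $\|\cdot\|_{sR}^{-2}$, i.e. on $-2$ being the homogeneity degree of $U$. This is the sub-Riemannian analogue of the classical strong-force condition. Had the singularity been weaker (as in the Newtonian $\rho^{-1}$ case) the same estimates would only yield $-U(\gamma(t))\ge \beta/(N\sqrt t)$, whose integral $\int_0^T t^{-1/2}\,dt$ is finite, and the argument would break down—consistent with the fact that Newtonian collisions can have finite action. Everything else (finiteness of $N$, the absolute continuity of $R$ needed to integrate $\dot R$, and the reduction to the potential term) is routine.
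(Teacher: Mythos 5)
Your argument is correct, but it follows a genuinely different route from the one in the paper. You share the first step (the homogeneity-plus-compactness argument giving $\rho \le M\,\|\cdot\|_{sR}$, hence $-U \ge \beta\,\|\cdot\|_{sR}^{-2}$), but after that you discard the kinetic term and show that the potential integral alone diverges, using the a priori H\"older bound $\|\gamma(t)\|_{sR}=d(\gamma(t),\gamma(0))\le l(\gamma|_{[0,t]})\le \sqrt{t}\,\|\dot\gamma\|_{\mathcal L^2}$ to get $-U(\gamma(t))\ge \beta/(N^2 t)$ and then $\int_0^T dt/t=\infty$. The paper instead runs the classical Gordon strong-force computation: it keeps both terms, applies $a^2+b^2\ge 2ab$ to bound the Lagrangian below by a constant times $\|\dot\gamma\|_{\mathbb H}/\|\gamma\|_{sR}$, and then invokes Lemma \ref{speed} (the eikonal identity $\|\nabla_{sR}R\|_{\mathbb H}=1$, derived from the Hamilton--Jacobi equation) to recognize this as $\frac{d}{dt}\log\|\gamma\|_{sR}$, whose integral diverges. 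Your version is more elementary in that it bypasses Lemma \ref{speed} and the Hamilton--Jacobi machinery entirely --- indeed you do not even need to ``integrate Lemma \ref{speed}'' as you suggest, since $R(t)\le\int_0^t\|\dot\gamma\|_{\mathbb H}\,ds$ is immediate from the definition of the sub-Riemannian distance --- but it leans on the hypothesis $\gamma\in H^1$ to get $N<\infty$ (the $N=\infty$ and $N=0$ edge cases are trivial, as you note), whereas the paper's logarithmic estimate applies verbatim to any absolutely continuous horizontal collision path. Both arguments hinge in the same way on the homogeneity degree $-2$ of $U$ being exactly the strong-force threshold, as you correctly observe.
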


\begin{proof} Recall that $\rho=((x^2+y^2)^2+\tfrac{1}{16}z^2)^{1/4}$, so that $U=\rho^{-2}$.
Since the Heisenberg sphere is homeomorphic to the Euclidean sphere,
the standard argument which shows that any two norms on $\mathbb R^n$ are Lipshitz equivalent
shows that $\rho$ and $||\cdot||_{sR}$ are Lipshitz equivalent: there exist positive constants $c$ and $C$
such that $$c \rho(x,y,z) < ||(x,y,z)||_{sR} < C \rho(x,y,z)$$ for $(x,y,z) \ne 0$. 
This, with the general fact that  $a^2+b^2 \geq 2ab$, gives
\begin{align*}
A(\gamma)&=\int_{\gamma}L\\
&= \int_{\gamma}\tfrac{1}{2}(\dot x^2 +\dot y^2)+\rho^{-2}\\
&\geq \int_{0}^T\Big(\tfrac{1}{2}||\dot \gamma||^2_{\mathbb H}+\frac{c_1}{||\gamma||^2_{sR}}\Big)dt\\
&\geq c_2\int_0^T \frac{||\dot \gamma||_{\mathbb H}}{||\gamma||_{sR}} dt.
\end{align*}
But this last integrand is non-negative, so the value of its integral decreases when taken over a sub-interval of $[0,T]$.  In particular, the value of the integral is smaller over the interval $[0, \epsilon]$ for small $\epsilon$.  This fact, along with Lemma \ref{speed}, implies 
\begin{align*}
A(\gamma)&\geq c_2\int_0^T \frac{||\dot \gamma||_{\mathbb H}}{||\gamma||_{sR}} dt \\
&\geq c_2\int_0^{\epsilon} \frac{||\dot \gamma||_{\mathbb H}}{||\gamma||_{sR}} dt \\
&\geq c_3\int_0^{\epsilon} \frac{\frac{d}{dt}|| \gamma||_{sR}}{||\gamma||_{sR}} dt \\
&= c_3\int_0^{u({\epsilon})}\frac{du}{u} \\
&= \lim_{a\to 0} c_3 \log u |_a^{u({\epsilon})}\\
&= \infty.
\end{align*}
Here, we have made the substitution $u(t)=||\gamma(t)||_{sR}$ with $u(0)=0$.
\end{proof}

\begin{cor}\label{nocollision}
Our curve $\gamma_*$ does not suffer a collision.
\end{cor}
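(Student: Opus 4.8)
The plan is to deduce Corollary \ref{nocollision} directly from the preceding Proposition together with Proposition \ref{inf}. The key logical structure is a simple contradiction: if the minimizer $\gamma_*$ were to suffer a collision, then it would pass through the origin at some time. Since $\gamma_* \in \mathcal F_k$ (established in Proposition \ref{weak}), after a time-translation we may assume without loss of generality that $\gamma_*(0) = 0$, i.e.\ the collision occurs at $t=0$. The preceding Proposition then applies verbatim to $\gamma_*$ and yields $A(\gamma_*) = \infty$.

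The contradiction comes from comparing this with the fact that $\gamma_*$ is an action minimizer. By Proposition \ref{inf} we have $A(\gamma_*) = \inf_{\gamma \in \mathcal F_k} A(\gamma)$, so it suffices to exhibit a single competitor curve in $\mathcal F_k$ with finite action; the infimum is then finite, contradicting $A(\gamma_*) = \infty$. Concretely, one can take a horizontal loop obtained by lifting a smooth regular planar curve that encircles the origin at a bounded distance away from it (for instance a suitably parametrized circle of positive radius, whose lift respects the symmetry conditions (S1) and (S2)); along such a curve the potential $U = -\alpha \rho^{-2}$ stays bounded and the kinetic term is a finite integral of a smooth speed, so its action is finite. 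Alternatively, and more cleanly, one simply notes that the minimizing sequence $\{\gamma_n\}$ satisfies $A(\gamma_n) \le M$ for all $n$, so the infimum of the action over $\mathcal F_k$ is at most $M < \infty$; hence $A(\gamma_*) \le M$, which is incompatible with $A(\gamma_*) = \infty$.

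I expect essentially no obstacle here: the hard analytic work has already been done in the preceding Proposition (the logarithmic divergence of the action near a collision) and in Proposition \ref{inf} (that $\gamma_*$ realizes the finite infimum). The only small point requiring care is the reduction ``without loss of generality the collision is at $t=0$,'' which relies on the time-translation invariance of the Lagrangian $L$ and the fact that the function space $\mathcal F_k$ is preserved under reparametrization respecting the symmetry conditions; since $S^1 = [0,T]$ has its endpoints identified and $L$ has no explicit $t$-dependence, a shift of the base point leaves the action and membership in $\mathcal F_k$ unchanged. With that observation the corollary follows immediately.
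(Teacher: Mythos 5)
Your proposal is correct and follows exactly the paper's route: the preceding Proposition shows any colliding curve in $\mathcal F_k$ has infinite action, while Proposition \ref{inf} gives $A(\gamma_*)=\inf_{\mathcal F_k}A$, which is finite (the paper leaves the finiteness implicit, whereas you justify it via the bound $A(\gamma_n)\le M$ on the minimizing sequence). The extra care you take with the time-translation reduction and the finiteness of the infimum is sound but not a different argument.
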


\begin{proof}
From Proposition \ref{inf}, we know $A(\gamma_*)$ equals the infimum of the action restricted to $\mathcal F_k$, which is finite.
\end{proof}

\subsection{A Critical Point of the Action} Here we prove   $dA(\gamma_*)=0$ on horizontal variations.

\begin{lemma}
The action functional $A$ is differentiable at any curve which avoids collision.
\end{lemma}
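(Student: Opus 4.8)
The plan is to separate the singular and regular parts of the action. Write $A = A_1 + A_2$, where
\[
A_1(\gamma) = \int_0^T \tfrac{1}{2}||\dot\gamma(t)||^2_{\mathbb H}\,dt, \qquad
A_2(\gamma) = \int_0^T \alpha\,\mu(\gamma(t))^{-1/2}\,dt,
\]
so that $A_1$ is the kinetic contribution and $A_2 = -\int_0^T U\,dt$ the potential one. The functional $A_1$ is a bounded, symmetric, positive quadratic form in the $\mathcal L^2$ variable $\dot\gamma$; such forms are smooth on a Hilbert space, so $A_1$ is $C^\infty$ with $dA_1(\gamma)(v) = \int_0^T \langle \dot\gamma, \dot v\rangle\,dt$. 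All the difficulty therefore lies in $A_2$, whose integrand is singular only at the origin.

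Next I would use the collision-avoidance hypothesis to localize away from the singularity. Since $\gamma$ is continuous on the compact circle $S^1$ and never meets the origin, there is $\delta>0$ with $||\gamma(t)||_{sR}\ge\delta$ for all $t$. By the Sobolev-type embedding of Lemma \ref{holder}, convergence in $\mathcal F_k$ implies uniform convergence, so the inclusion $\mathcal F_k \hookrightarrow C^0(S^1,\mathbb H)$ is continuous; hence there is a ball $B$ about $\gamma$ such that every $\eta\in B$ satisfies $\delta/2 \le ||\eta(t)||_{sR}\le R$ for all $t$ and some fixed $R$. Thus all such $\eta$ take values in the compact annular region
\[
\Omega = \{\,q\in\mathbb H : \delta/2 \le ||q||_{sR}\le R\,\},
\]
on which $-U = \alpha\mu^{-1/2}$ is $C^\infty$ with all derivatives bounded.

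On $\Omega$ I would differentiate under the integral sign. For a variation $v$ and small $s$, a Taylor expansion of the smooth function $-U$ at $\gamma(t)$ gives
\[
-U(\gamma(t)+sv(t)) = -U(\gamma(t)) - s\,dU(\gamma(t))(v(t)) + O\!\big(s^2||v(t)||_E^2\big),
\]
with the remainder controlled uniformly by $\sup_\Omega ||D^2U||$. Dividing by $s$ and letting $s\to 0$, a dominated-convergence argument—with dominating function $\sup_\Omega ||DU||\cdot||v(t)||_E$, integrable on $[0,T]$—yields the Gateaux derivative
\[
dA_2(\gamma)(v) = -\int_0^T dU(\gamma(t))(v(t))\,dt.
\]
This is linear in $v$ and, by Cauchy–Schwarz together with the uniform bound on $||DU||$ over $\Omega$ and the embedding of Lemma \ref{holder}, satisfies $|dA_2(\gamma)(v)| \le C\,||v||_{\mathcal F_k}$, so it is a bounded functional. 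Combining with the smoothness of $A_1$ shows $A$ is Gateaux differentiable at $\gamma$; since $q\mapsto dU(q)$ is continuous on $\Omega$, the derivative depends continuously on $\gamma$, which upgrades Gateaux to Fr\'echet differentiability.

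The main obstacle is the passage from pointwise smoothness of $U$ to differentiability of the composition functional $\eta\mapsto\int_0^T U\circ\eta\,dt$, that is, the differentiability of the associated Nemytskii operator. The crux is a remainder estimate uniform in $t$ and integrable in $t$, and this is exactly what collision-avoidance supplies: confining the curves to the compact annulus $\Omega$ bounds the first and second derivatives of $U$ away from the singularity. Without this hypothesis $||DU||$ and $||D^2U||$ blow up like negative powers of $||q||_{sR}$ near the origin, the remainder ceases to be dominated, and—consistent with the preceding proposition—the action is in fact infinite, so no such differentiability can hold.
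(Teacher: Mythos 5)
Your proof is correct and follows the same route as the paper: differentiate under the integral sign and observe that the only obstruction is the singularity of $U$ at the origin, which collision-avoidance rules out. The paper's own proof simply writes down the formula $dA(c)(e)=\int_0^T \langle \dot c(t), \dot e(t)\rangle_E + \langle \nabla U(c(t)), e(t)\rangle_E\,dt$ and asserts that this expression exists and is continuous away from the origin; your localization to a compact annulus and the dominated-convergence justification of the Nemytskii-operator differentiability supply the analytic details the paper leaves implicit.
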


\begin{proof}
The proof is straightforward.  Let $e(t)=\frac{d}{dh}c(t,h)|_{h=0} $ be a variation of a horizontal path $c(t)$.  We compute the derivative of $A$ to be

\begin{align}
dA(c)(e) &= \frac{d}{dh} A(c(h))|_{h=0}\nonumber \\
&= \label{actionderivative} \int_0^T \langle \dot c(t), \dot e(t) \rangle_E + \langle \nabla U(c(t)), e(t) \rangle_E dt.
\end{align}
Here, $\nabla$ denotes the Euclidean gradient operator and both inner products are Euclidean.  In the first, $\dot c$ and $\dot e$ are horizontal so this is the same as the sub-Riemmanian inner product; in the second, neither term need be horizontal.
Then the expression \eqref{actionderivative} shows that $dA(c)$ exists and is indeed continuous, so long as $c$ does not pass through the origin, where $U$ has a singularity.
\end{proof}

\begin{prop}\label{critpt}
We have that $dA(\gamma_*)(e)=0$ for any $e \in \mathcal F_k$. 
\end{prop}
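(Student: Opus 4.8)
The plan is the standard first-variation argument: a minimizer of a differentiable functional over a linear space is automatically a critical point there. By Proposition \ref{inf}, $\gamma_*$ realizes the infimum of $A$ over $\mathcal F_k$; by Corollary \ref{nocollision} it avoids collision, so by the preceding Lemma $A$ is differentiable at $\gamma_*$, with derivative given by \eqref{actionderivative}. Everything then reduces to producing, for each $e\in\mathcal F_k$, a one-parameter family of curves in $\mathcal F_k$ through $\gamma_*$ with variation $e$, and differentiating the action along it.

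I would fix $e\in\mathcal F_k$ and set $c_h:=\gamma_*+h\,e$, using the vector-space structure on $\mathcal F_k$ from Section \ref{functionspace}. The crux is verifying that each $c_h$ again lies in $\mathcal F_k$, and this is precisely the purpose of the $(f,g)$ controls of the identification $\mathcal H\cong\mathcal L^2([0,T],\mathbb R^2)\times\mathbb H$: in these coordinates the horizontal constraint $\dot z=\tfrac12(x\dot y-y\dot x)$ is automatic and therefore survives the addition $\gamma_*+h\,e$. Since rotation about the $z$-axis is a group automorphism and an isometry, it acts linearly on $(f,g)$, so condition (S1) is respected by the linear structure. The condition (S2) must be checked directly in these coordinates; this is the delicate point, since $z$ depends quadratically on the controls, and I expect it to be the main obstacle of the proof. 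Finally, because $\gamma_*$ is continuous and collision-free on the compact circle $S^1$ it is bounded away from the origin, and $c_h\to\gamma_*$ uniformly as $h\to0$; hence every $c_h$ with $|h|$ small is collision-free and $A$ remains differentiable along the family near $h=0$.

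With admissibility established, I would set $\phi(h):=A(c_h)$. Every $c_h$ lies in $\mathcal F_k$ and $c_0=\gamma_*$ minimizes $A$ there, so $\phi$ attains a minimum at $h=0$ and hence $\phi'(0)=0$. By the chain rule, $\phi'(0)$ equals the directional derivative $dA(\gamma_*)$ evaluated on the variation $\tfrac{d}{dh}c_h\big|_{h=0}$, which is computed by the formula \eqref{actionderivative}; identifying this variation with $e$ yields $dA(\gamma_*)(e)=0$. The optimization step is routine, so the genuine difficulty lies entirely in the admissibility verification of the previous paragraph: one must reconcile the nonlinear constraints defining $\mathcal F_k$ — chiefly horizontality and the $z$-antisymmetry (S2) — with the linear variation $\gamma_*+h\,e$. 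Passing to the controls $(f,g)$ linearizes horizontality and the rotational symmetry (S1), and reduces the remaining check of (S2) to a direct computation.
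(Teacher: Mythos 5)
Your route is the same one the paper takes: minimality from Proposition \ref{inf}, differentiability from Corollary \ref{nocollision} and the preceding Lemma, and then differentiation of $h\mapsto A(\gamma_*+he)$ at $h=0$ using the linear structure on $\mathcal F_k$ --- the step the paper compresses into ``the standard argument then gives our result.'' The difference is that you explicitly isolate the admissibility question (does $c_h=\gamma_*+he$ stay in $\mathcal F_k$?) and leave it unresolved, calling the (S2) check ``the main obstacle.'' That is a genuine gap, and your partial treatment of it contains an error: you assert that (S1) is respected by the linear structure because rotation acts linearly on the controls $(f,g)$. But (S1) constrains the whole curve, including $z(t+T/k)=z(t)$, and in the control coordinates $z$ is quadratic: $z(t)=z(0)+\tfrac12\int_0^t(x\dot y-y\dot x)\,ds$. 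For $c_h$ this picks up the cross term $x_*\dot y_e+x_e\dot y_*-y_*\dot x_e-y_e\dot x_*$, whose integral over a period equals $2\int_0^T(x_*\dot y_e-y_*\dot x_e)\,dt$ and has no reason to vanish for arbitrary $e\in\mathcal F_k$. The same cross term obstructs the loop-closure condition $z(T)=z(0)$ and (S2). So \emph{every} $z$-constraint defining $\mathcal F_k$, not just (S2), is quadratic in $(f,g,x_0)$, and $\mathcal F_k$ is not visibly closed under the addition your argument uses.

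To be fair, the paper relies on exactly the same unproven assertion (that $\mathcal F_k$ carries a vector space structure in which $\gamma_*+he$ is again admissible), so you have located the real crux rather than invented a difficulty. But as written your proof does not close it: you would need either to show the quadratic cross terms vanish on $\mathcal F_k$ (they do not in general), or to replace the linear family by a genuine one-parameter family in $\mathcal F_k$ through $\gamma_*$ with prescribed variation $e$, e.g.\ by correcting the controls via an implicit function theorem. There is also a smaller identification issue you gloss over: even when $c_h$ is admissible, $\tfrac{d}{dh}z_{c_h}\big|_{h=0}$ is the linearization of the quadratic $z$-functional, not $z_e$ itself, so ``identifying this variation with $e$'' in \eqref{actionderivative} needs justification before the formula gives $dA(\gamma_*)(e)$ in the stated sense.
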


\begin{proof}
By Corollary \ref{nocollision}, $\gamma_*$ does not pass through the origin, so by the previous Lemma, $dA(\gamma_*)$ exists and is continuous.  By Proposition \ref{inf}, $\gamma_*$ is a local minimum of $A|_{\mathcal F_k}$.  The standard argument then gives our result.

\end{proof}

To combine the results obtained thus far, we need the following lemma, due to R. Palais (\cite{Palais}).

\begin{lemma}[Principle of Symmetric Criticality]
Let $\Gamma$ be a finite group acting on Hilbert space $V$ and let $V^{\Gamma}$ denote the fixed points of this action.  Suppose  $f \colon V \to \mathbb{R}$ is $\Gamma$-invariant, and that $f|_{V^{\Gamma}}$ has a critical point at $x_0 \in V$.  Then $x_0$ is also a critical point for $f$.
\end{lemma}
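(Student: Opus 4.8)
The plan is to use the standard averaging argument, which rests on the (here implicit) hypothesis that $\Gamma$ acts on $V$ by bounded linear isometries; in our application $\Gamma$ is generated by the symmetry operations (S1) and (S2), each of which acts linearly and isometrically on the loop space, so this hypothesis is justified. Under it, $V^{\Gamma}$ is a closed linear subspace, and its tangent space at any point (in particular at $x_0 \in V^{\Gamma}$) is $V^{\Gamma}$ itself. The key tool is the averaging operator
\[ P \colon V \to V, \qquad Pv = \frac{1}{|\Gamma|}\sum_{g \in \Gamma} g\cdot v, \]
which is a bounded linear projection of $V$ onto $V^{\Gamma}$: for any $h \in \Gamma$ one has $h\cdot Pv = \frac{1}{|\Gamma|}\sum_{g\in\Gamma}(hg)\cdot v = Pv$, so $Pv \in V^{\Gamma}$, and $P$ fixes $V^{\Gamma}$ pointwise.

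First I would show that the differential $df(x_0) \in V^{*}$ is $\Gamma$-invariant. Differentiating the invariance relation $f(g\cdot x) = f(x)$ and applying the chain rule gives $df(g\cdot x)(g\cdot v) = df(x)(v)$ for all $x, v \in V$. Evaluating at $x = x_0$ and using the fact that $g\cdot x_0 = x_0$ yields
\[ df(x_0)(g\cdot v) = df(x_0)(v), \qquad \forall\, g \in \Gamma,\ v \in V, \]
that is, $df(x_0)$ is constant on $\Gamma$-orbits.

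Finally I would combine the two facts. Since $x_0$ is a critical point of $f|_{V^{\Gamma}}$ and the tangent space to $V^{\Gamma}$ at $x_0$ equals $V^{\Gamma}$, the functional $df(x_0)$ annihilates $V^{\Gamma}$. For an arbitrary $v \in V$, the invariance of $df(x_0)$ gives
\[ df(x_0)(v) = \frac{1}{|\Gamma|}\sum_{g\in\Gamma} df(x_0)(g\cdot v) = df(x_0)(Pv) = 0, \]
because $Pv \in V^{\Gamma}$. Hence $df(x_0) = 0$ on all of $V$, so $x_0$ is a critical point of $f$. The only real subtlety — a step to state with care rather than a source of genuine difficulty — is the linearity and isometry of the $\Gamma$-action, which guarantees both that $P$ is a well-defined bounded projection onto $V^{\Gamma}$ and that $V^{\Gamma}$ is a linear subspace whose tangent space is itself; once this is in place the argument is purely formal.
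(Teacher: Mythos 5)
Your argument is correct, and it takes a slightly different route from the one the paper sketches. The paper's (commented-out) proof works with the gradient rather than the differential: it chooses an inner product for which $\Gamma$ acts orthogonally, invokes Maschke's theorem to split $V=V^{\Gamma}\oplus W$ with $W$ a $\Gamma$-invariant complement, writes $\nabla f(x_0)=u+w$, kills $u$ by criticality on $V^{\Gamma}$, and then uses equivariance of the gradient at the fixed point $x_0$ to conclude that $w$ is $\Gamma$-fixed and hence lies in $V^{\Gamma}\cap W=\{0\}$. You instead average: you show $df(x_0)$ is constant on $\Gamma$-orbits and push every test vector through the projection $P=\tfrac{1}{|\Gamma|}\sum_{g}g$ onto $V^{\Gamma}$, where $df(x_0)$ already vanishes. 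The two arguments are really dual to one another -- your $P$ is exactly the projection along the Maschke complement -- but yours has the mild advantages of never identifying $df(x_0)$ with a gradient vector (so the inner product enters only through boundedness of the action, not through an equivariance computation) and of producing the invariant complement explicitly as $\ker P$ rather than by citing Maschke. The paper's version makes the geometric picture more vivid: the gradient at a fixed point is itself a fixed vector. You are also right to flag, as the one point needing care, that the action must be by bounded linear maps for $P$ to be a bounded projection and for $V^{\Gamma}$ to be a closed subspace equal to its own tangent space; in the application the generators $\sigma,\tau$ are compositions of time-translations, a reflection in $z$, and a rotation, all of which are linear isometries of the loop space, so the hypothesis holds.
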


\begin{prop}
We have $dA(\gamma_*)(e)=0$ for any horizontal $e \in H^1([0, T], \mathbb H)$.
\end{prop}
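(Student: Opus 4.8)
The plan is to upgrade the partial criticality of Proposition \ref{critpt} to full criticality by invoking the Principle of Symmetric Criticality just stated. To this end I would take $V$ to be the Hilbert space of \emph{all} horizontal loops $\gamma \in H^1(S^1,\mathbb H)$, with the vector-space and inner-product structure described in Section \ref{functionspace} but with the constraints (S1) and (S2) forgotten, and realize the symmetry conditions defining $\mathcal F_k$ as the fixed-point set of a finite group $\Gamma$ acting linearly on $V$. Concretely, $\Gamma$ is generated by the operator $\sigma$ implementing (S1),
\[ (\sigma\gamma)(t) = R_{2\pi/k}^{-1}\,\gamma\!\left(t + \tfrac{T}{k}\right), \]
which has order $k$ and whose fixed points are exactly the curves satisfying (S1), together with an order-two operator $\tau$ implementing (S2). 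Because the potential $U$ depends on the coordinates only through $x^2+y^2$ and $z^2$, while the kinetic energy depends only on $\dot x^2+\dot y^2$, the functional $A$ is invariant under rotations about the $z$-axis, under reflections of the $xy$-plane, under the reflection $z\mapsto -z$, and under time translation; hence $A$ is $\Gamma$-invariant.

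First I would check that $\sigma$ and $\tau$ are well-defined bounded (indeed orthogonal) linear maps of $V$ that preserve the horizontal distribution and generate a finite group, and that their common fixed-point set is exactly $V^{\Gamma} = \mathcal F_k$. The subtle point, and the step I expect to be the main obstacle, is the construction of $\tau$: the reflection $z\mapsto -z$ by itself does not preserve horizontality, since the horizontal condition $\dot z = \tfrac12(x\dot y - y\dot x)$ is orientation-sensitive in the $xy$-plane. One must therefore couple the $z$-flip to an orientation-reversing reflection of the $xy$-plane, equivalently realize it as a rotation by $\pi$ about an axis lying in the $xy$-plane, composed with the time shift $t \mapsto t + T/2$, so that the resulting map preserves the distribution and squares to the identity. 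Verifying that, in the presence of (S1), the fixed-point condition for this $\tau$ reduces precisely to (S2) is exactly the delicate bookkeeping I expect to occupy most of the work.

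With $V$, $\Gamma$, and the $\Gamma$-invariance of $A$ in hand, the conclusion follows quickly. By Corollary \ref{nocollision} the limiting curve $\gamma_*$ avoids collision, so $A$ is differentiable at $\gamma_*$, and by Proposition \ref{critpt} it is a critical point of the restriction $A|_{\mathcal F_k} = A|_{V^{\Gamma}}$. The Principle of Symmetric Criticality then guarantees that $\gamma_*$ is a critical point of $A$ on all of $V$; equivalently, the gradient $\nabla A(\gamma_*)$, which is $\Gamma$-fixed by invariance, has vanishing $V^{\Gamma}$-component and therefore vanishes entirely. Hence $dA(\gamma_*)(e) = 0$ for every horizontal $e \in H^1([0,T],\mathbb H)$, as claimed.
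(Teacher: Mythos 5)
Your overall strategy is the one the paper uses: realize $\mathcal F_k$ as the fixed-point set $V^{\Gamma}$ of a finite group acting on the Hilbert space of horizontal $H^1$ loops, check that $A$ is $\Gamma$-invariant, and combine Corollary \ref{nocollision}, Proposition \ref{critpt}, and Palais's Principle of Symmetric Criticality. You also correctly identify a real subtlety that the paper passes over in silence: the generator implementing (S2), namely $\gamma(t)\mapsto(x(t-T/2),\,y(t-T/2),\,-z(t-T/2))$, does \emph{not} map horizontal curves to horizontal curves, since under the bare $z$-flip the constraint $\dot z=\tfrac12(x\dot y-y\dot x)$ becomes $-\dot z=\tfrac12(x\dot y-y\dot x)$. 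The paper's own $\sigma$ is exactly this map, so your objection applies to the published argument as well.

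The gap is that your proposed repair does not close the hole, and you defer precisely the step where it fails. If you couple the $z$-flip to an orientation-reversing isometry of the $xy$-plane (say $(x,y,z)\mapsto(x,-y,-z)$, which does preserve $G=\tfrac12 x\dot y-\tfrac12 y\dot x-\dot z=0$ since $G\mapsto -G$), then the fixed-point condition of the resulting involution is $x(t)=x(t-T/2)$, $y(t)=-y(t-T/2)$, $z(t)=-z(t-T/2)$. The first two equations are genuine new constraints on the planar projection: because $k$ is odd, $T/2$ is not an integer multiple of $T/k$, so (S1) gives no relation between $\tilde\gamma(t)$ and $\tilde\gamma(t+T/2)$, and these conditions are not consequences of (S1) and (S2). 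Hence your claim that ``in the presence of (S1), the fixed-point condition for this $\tau$ reduces precisely to (S2)'' is false; one gets $V^{\Gamma}\subsetneq\mathcal F_k$ strictly. Palais then requires $\gamma_*$ to be a critical point of $A|_{V^{\Gamma}}$ \emph{with} $\gamma_*\in V^{\Gamma}$, but the minimizer was produced in the larger class $\mathcal F_k$ and there is no reason it should satisfy the extra reflection symmetry. To make this line of argument rigorous you would have to either build the full reflection symmetry into the definition of $\mathcal F_k$ from the start (rerunning Steps 1--4 on the smaller space), or exhibit a bona fide isometric involution of the space of horizontal loops whose fixed-point equation is exactly (S2); as written, neither your proposal nor the paper's two-line proof supplies such a map.
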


\begin{proof}
Let $V$ be the space of horizontal paths in $H^1([0,T], \mathbb H)$, and let $\Gamma$ be the group $\mathbb Z/\mathbb Z_2 \times \mathbb Z/\mathbb Z_k$, $k\geq 3$ an odd integer, whose action is given as follows\footnote{An additional application of this argument allows us to restrict our attention to periodic orbits, so that the domains of these curves are well-defined.}: 
\begin{align*}
\mathbb Z/\mathbb Z_2 \times \mathbb Z/\mathbb Z_k &= \langle (\sigma, \tau)\ | \ (\sigma^2, \tau^k)=1\rangle\\
\sigma \cdot (x(t), y(t), z(t)) &=(x(t-T/2), y(t-T/2), -z(t-T/2))\\
\tau \cdot (x(t), y(t), z(t)) &= R_{2\pi/k}(x(t-T/k), y(t-T/k), z(t-T/k))
\end{align*}
Then $ V^{\Gamma}$ is precisely the function space $\mathcal F_k$.
Proposition \ref{critpt} and the previous Lemma imply the result.
\end{proof}

\subsection{Satisfaction of the Equations of Motion}
As shown in the previous section, $\gamma_*$ is a critical point of the action functional restricted to horizontal variations.  According to the Principal of Least Action,  $\gamma_*$ should satisfy the equations of motion.  More precisely, we will employ the standard argument from the calculus of variations (see \cite{Bliss, Bolza, Gelfand, Griffiths, Young}): invoke the method of Lagrange multipliers, integrate by parts, then apply the fundamental lemma of the calculus of variations.  This shows that  $\gamma_*$ satisfies the Euler-Lagrange equations from Section \ref{equations}, which were apparently equivalent to Hamilton's equations.

Recall that, as in Section \ref{equations}, our horizontal constraint is precisely the zero set of the function
\[G(t,  q, {\dot q})=\tfrac{1}{2}x\dot y - \tfrac{1}{2}y \dot x-\dot z\]
and our modified action functional is 
\[ A_{\lambda}(\gamma) = \int_0^T  L_{\lambda}(t, \gamma, \dot \gamma) dt,\]
where $\lambda=\lambda(t)$ is a scalar and we have written $L_{\lambda}(t, q, \dot q)= L(t, q, \dot q)-\lambda(t)G(t, q, \dot q)$.

\begin{lemma}[Lagrange multipliers]\label{multipliers}
If $c$ is a critical point of the action $A$ restricted to horizontal curves, then there exists some $\lambda=\lambda(t)$ such that $c$ is a critical point of $A_{\lambda}$.
\end{lemma}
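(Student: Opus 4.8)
The plan is to prove the classical Lagrange multiplier rule for the problem of Lagrange, whose one genuinely nontrivial ingredient is the \emph{nondegeneracy} of the constraint $G$ in the velocity $\dot z$: indeed $\partial G/\partial \dot z=-1\neq 0$ while $\partial G/\partial z=0$ and $\partial L/\partial \dot z=0$. The hypothesis says that $dA(c)(e)=0$ for every variation $e=(e_1,e_2,e_3)$ that is tangent to the space of horizontal curves, i.e. that satisfies the linearized constraint $dG(c)(e)=0$ together with the endpoint conditions; the goal is to produce a scalar $\lambda(t)$ with $dA_\lambda(c)(e)=0$ for \emph{all} variations, where $dA_\lambda(c)(e)=dA(c)(e)-\int_0^T\lambda\,dG(c)(e)\,dt$.

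First I would use the single nondegenerate direction to \emph{define} the multiplier. Because $\partial L_\lambda/\partial\dot z=\lambda$ and $\partial L_\lambda/\partial z=\partial L/\partial z$, the Euler-Lagrange equation of $L_\lambda$ in the $z$-slot is forced to read $\dot\lambda=\partial L/\partial z$, so I set
\[
\lambda(t)=\lambda(0)+\int_0^t\frac{\partial L}{\partial z}\big(c(s),\dot c(s)\big)\,ds.
\]
This is legitimate because Corollary \ref{nocollision} keeps $c$ off the singularity of $U$, so the integrand is continuous; reassuringly, it reproduces the equation $\dot\lambda=-\tfrac{\alpha}{16}z\mu^{-3/2}$ already recorded in Section \ref{equations}.

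With $\lambda$ so chosen the argument splits. On an admissible variation the constraint term $\int_0^T\lambda\,dG(c)(e)\,dt$ vanishes, so $dA_\lambda(c)(e)=dA(c)(e)=0$ by hypothesis. On a purely vertical variation $e=(0,0,\psi)$ one integrates by parts to get
\[
dA_\lambda(c)\big((0,0,\psi)\big)=\int_0^T\Big(\frac{\partial L}{\partial z}-\dot\lambda\Big)\psi\,dt+\big[\lambda\psi\big]_0^T,
\]
whose integral vanishes by the definition of $\lambda$. It then remains to combine these two computations: one wants to express an arbitrary variation as an admissible variation plus a vertical one and invoke linearity of $dA_\lambda(c)$. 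The nondegeneracy $\partial G/\partial\dot z=-1$ is exactly what makes this possible, since it lets one solve the linearized constraint for the vertical component once $e_1,e_2$ are prescribed.

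The hard part, and the step I would treat most carefully, is precisely this last combination, because $G$ is a nonintegrable velocity constraint rather than a pointwise constraint on positions. Concretely, one must verify the \emph{normality} (surjectivity) of the linearized constraint operator $e\mapsto dG(c)(e)$ on the fixed-endpoint variation space — a fact that again hinges on $\partial G/\partial\dot z\neq 0$ — and must track the boundary term $\big[\lambda\psi\big]_0^T$ produced when the vertical correction needed to restore admissibility fails to vanish at the endpoints. Once normality is in hand, the abstract Lagrange multiplier theorem of Lyusternik (equivalently, the classical multiplier rule of \cite{Bliss,Bolza,Gelfand}) upgrades the statement that $dA(c)$ annihilates the kernel of $dG(c)$ to the statement that $dA(c)$ is a multiple of $dG(c)$, which is the assertion with the $\lambda$ constructed above.
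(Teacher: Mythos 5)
Your proposal is correct in outline but takes a genuinely different route from the paper, which proves this lemma by citation alone (pointing to Bliss, Bolza, Gelfand--Fomin, Griffiths, and Young). You instead reconstruct the classical argument: you identify the nondegeneracy $\partial G/\partial\dot z=-1$, define the multiplier by integrating the forced relation $\dot\lambda=\partial L/\partial z$ (correctly recovering $\lambda=p_z$), and reduce everything to a decomposition of variations plus normality of the linearized constraint. The two points you flag but leave to the literature can in fact be closed concretely here, and doing so is worthwhile because the cited sources treat fixed-endpoint rather than periodic problems, and the periodic boundary terms are exactly where care is needed. First, the boundary term $[\lambda\psi]_0^T$: the constant vertical variation $e=(0,0,1)$ is periodic and satisfies $dG(c)(e)=0$, so the hypothesis gives $\int_0^T\partial L/\partial z\,dt=dA(c)(e)=0$, hence $\lambda(T)=\lambda(0)$ and the boundary term vanishes on periodic $\psi$. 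Second, surjectivity of $e\mapsto dG(c)(e)$ onto $\mathcal L^2([0,T])$: purely vertical periodic variations reach only mean-zero functions, and the one-dimensional cokernel is killed by variations $(e_1,e_2,0)$ because $\int_0^T dG(c)((e_1,e_2,0))\,dt=\int_0^T(e_1\dot y-e_2\dot x)\,dt$ after integration by parts, which can be made nonzero precisely when $c$ is non-constant; and $\gamma_*\in\mathcal F_k$ cannot be constant, since a constant curve satisfying (S1) lies on the $z$-axis and (S2) then forces it to be the excluded origin. This non-constancy is the statement that $\gamma_*$ is not an abnormal curve of the Heisenberg distribution, which is the one genuinely sub-Riemannian danger your argument correctly isolates. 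With these two observations your proof is complete and self-contained, which is arguably preferable to the paper's bare citation.
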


\begin{proof}
This is a classical result whose various proofs may be found in, for example, \cite{Bliss}, Section 39 of \cite{Bolza}, Section 12 of \cite{Gelfand}, Section IV(e) of \cite{Griffiths}, or Volume II of \cite{Young}.   

\end{proof}

\begin{prop}
Our $\gamma_*$ satisfies the equation
\[\frac{\partial L_{\lambda}}{\partial \gamma_*}- \frac{d}{dt}\bigg( \frac{\partial L_{\lambda}}{\partial \dot \gamma_*}  \bigg)=0\]
for some $\lambda$.
\end{prop}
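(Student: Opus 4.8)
The plan is to upgrade the constrained criticality established in the previous section to \emph{unconstrained} criticality of the modified functional, and then run the classical first-variation argument. The preceding proposition gives $dA(\gamma_*)(e)=0$ for every horizontal $e\in H^1([0,T],\mathbb H)$, so $\gamma_*$ is a critical point of $A$ restricted to horizontal curves. Lemma \ref{multipliers} then supplies a scalar function $\lambda=\lambda(t)$ for which $\gamma_*$ is a critical point of $A_\lambda$, now with no horizontality restriction on the competing variations; that is, $dA_\lambda(\gamma_*)(e)=0$ for all $e\in H^1([0,T],\mathbb R^3)$, not merely the horizontal ones.

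First I would compute the first variation of $A_\lambda$ at $\gamma_*$ directly from its integral definition, exactly as in the earlier computation of $dA$ but retaining the extra term $-\lambda G$:
\[
dA_\lambda(\gamma_*)(e)=\int_0^T\Big(\frac{\partial L_\lambda}{\partial q}\cdot e+\frac{\partial L_\lambda}{\partial \dot q}\cdot \dot e\Big)\,dt,
\]
all derivatives being evaluated along $(\gamma_*,\dot\gamma_*)$. Next I would integrate the $\dot e$ term by parts. Because every curve in $\mathcal F_k$ is a closed loop parametrized by $S^1=[0,T]/\!\sim$, the boundary contribution $\big[\tfrac{\partial L_\lambda}{\partial \dot q}\cdot e\big]_0^T$ vanishes for periodic $e$, leaving
\[
dA_\lambda(\gamma_*)(e)=\int_0^T\Big(\frac{\partial L_\lambda}{\partial q}-\frac{d}{dt}\frac{\partial L_\lambda}{\partial \dot q}\Big)\cdot e\,dt=0
\]
for every admissible $e$. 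The fundamental lemma of the calculus of variations then forces the bracketed coefficient to vanish identically, which is precisely the asserted Euler-Lagrange equation.

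The main obstacle is regularity. A priori $\gamma_*$ lies only in $H^1$ and the multiplier $\lambda$ produced by Lemma \ref{multipliers} is merely measurable, so the term $\frac{d}{dt}\frac{\partial L_\lambda}{\partial \dot q}$ occurring in the classical fundamental lemma is not yet guaranteed to be defined, and one must justify the integration by parts with this limited regularity. I would circumvent this with the du Bois--Reymond form of the lemma: first derive the integrated (weak) Euler-Lagrange identity, which already shows that $\frac{\partial L_\lambda}{\partial \dot q}$ is absolutely continuous, and only then differentiate. Since Corollary \ref{nocollision} guarantees that $\gamma_*$ avoids the origin, $L_\lambda$ is smooth along the curve, so a standard bootstrap promotes $\gamma_*$ to a genuine $C^2$ (indeed smooth) solution on which the differentiated equation holds in the classical sense. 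Finally, as recorded in Section \ref{equations}, the choice $\lambda=p_z$ identifies these Euler-Lagrange equations with Hamilton's equations, completing the proof of Theorem \ref{periodic}.
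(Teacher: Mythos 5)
Your overall route is the same as the paper's: invoke Lemma \ref{multipliers} to pass from constrained criticality of $A$ to unconstrained criticality of $A_\lambda$, compute the first variation, integrate by parts, and apply the fundamental lemma. Your added remarks on regularity (the du Bois--Reymond form of the lemma and the bootstrap once collision is excluded) are a welcome strengthening of the paper's ``standard calculation'' shorthand. However, there is one step that is not justified as written: you assert that the boundary contribution $\bigl[\tfrac{\partial L_\lambda}{\partial \dot q}\cdot e\bigr]_0^T$ vanishes because $e$ is periodic. With $e(0)=e(T)$ this term equals $\bigl(\tfrac{\partial L_\lambda}{\partial \dot q}(T)-\tfrac{\partial L_\lambda}{\partial \dot q}(0)\bigr)\cdot e(0)$, and the quantities $\tfrac{\partial L_\lambda}{\partial \dot q}$ are the conjugate momenta $p_x=\dot x+\tfrac12\lambda y$, $p_y=\dot y-\tfrac12\lambda x$, $p_z=\lambda$; since the multiplier $\lambda(t)$ produced by Lemma \ref{multipliers} is not known a priori to satisfy $\lambda(0)=\lambda(T)$, there is no a priori reason for this term to vanish. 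Periodicity of $\gamma_*$ in configuration space does not give periodicity in momentum space for free.

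The Euler--Lagrange equation itself survives: you can restrict to variations $\eta$ vanishing at the endpoints, for which the boundary term genuinely disappears, and then the fundamental lemma applies on $(0,T)$. But note what the paper does instead: it keeps the boundary term, allows arbitrary $\eta(0)=\eta(T)$, and extracts from the resulting identity \emph{both} the Euler--Lagrange equation and the additional condition $\tfrac{\partial L_\lambda}{\partial \dot \gamma_*}(T)=\tfrac{\partial L_\lambda}{\partial \dot \gamma_*}(0)$, i.e.\ $p_x, p_y, p_z$ are periodic. That extra conclusion is used immediately afterwards to guarantee that $\gamma_*$ is periodic in all of phase space, not merely in configuration space, which is needed to conclude that the minimizer is a genuine periodic orbit of the Hamiltonian flow. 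By discarding the boundary term you lose exactly this piece of the argument, so you should either retain it as the paper does or supply the momentum periodicity by a separate argument.
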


\begin{proof}
According to Lemma \ref{multipliers}, $\gamma_*$  is a critical point of $A_{\lambda}$ for some $\lambda$ which we now fix.
A standard calculation and the fundamental lemma of the calculus of variations then give
\[0=\int_0^T\left( \frac{\partial L_{\lambda}}{\partial \gamma_*}\eta(t)
-\frac{d}{dt}\frac{\partial L_{\lambda}}{\partial \dot \gamma_*} \eta(t) \right) dt 
+\left(\frac{\partial L_{\lambda}}{\partial \dot \gamma_*}(T)-\frac{\partial L_{\lambda}}{\partial \dot \gamma_*}(0)\right)\eta(0),
\]
for any $\eta$ which is twice differentiable and satisfies the periodicity condition $\eta(0)=\eta(T)$.
Choosing suitable test functions $\eta$, we must have
\begin{equation}\label{EL2}
\frac{\partial L_{\lambda}}{\partial \gamma_*}- \frac{d}{dt}\bigg( \frac{\partial L_{\lambda}}{\partial \dot \gamma_*}  \bigg)=0
\end{equation}
and
\begin{equation}\label{themomentathemselvesarenecessarilyperiodic}
\frac{\partial L_{\lambda}}{\partial \dot \gamma_*}(T)=\frac{\partial L_{\lambda}}{\partial \dot \gamma_*}(0).
\end{equation}

\end{proof}

Then  \eqref{EL2} yields the version of the Euler-Lagrange equations given in \eqref{EL} which were seen to be equivalent to Hamilton's equations.  Thus, $\gamma_*$ satisfies the equations of motion.
Also, \eqref{themomentathemselvesarenecessarilyperiodic} simplifies to the three equations
\begin{align*}
p_x(0)&=p_x(T) \\
p_y(0)&=p_y(T) \\
p_z(0)&=p_z(T),
\end{align*}
where $\gamma_*(t)=(x(t),y(t),z(t),p_x(t), p_y(t), p_z(t))$.
This guarantees that our curve $\gamma_*$ is periodic in all of phase space, not just in configuration space.

The proof of Theorem \ref{periodic} is now complete.

\bibliographystyle{amsplain}

\end{document}